\documentclass[a4paper]{article}
\usepackage{latexsym,bm}
\usepackage{amssymb,amsmath,amsthm}
\usepackage[english,german]{babel}
\usepackage{cite}
\usepackage[colorlinks=true]{hyperref}
\usepackage{color}

\topmargin 0pt \textheight 22 true cm \textwidth 16  true cm
\oddsidemargin 0pt \evensidemargin 0pt

\newtheorem{theorem}{Theorem}[section]
\newtheorem{lemma}[theorem]{Lemma}
\newtheorem{remark}[theorem]{Remark}
\newtheorem{proposition}[theorem]{Propositon}
\newtheorem{definition}[theorem]{Definition}
\newtheorem{corollary}[theorem]{Corollary}
\numberwithin{equation}{section}
\hypersetup{linkcolor=blue,urlcolor=red,citecolor=red}
\allowdisplaybreaks[4] \numberwithin{equation}{section}

\title{Switching properties of  time optimal controls for systems of
 heat equations coupled by constant matrices\thanks{This work was partially supported by the National Natural Science Foundation of China  under grants 11601377, 11901432, 11971022.}}
\author{Shulin Qin\thanks{School of Science, Tianjin University of Commerce, Tianjin 300134, China (\texttt{shulinqin@yeah.net})}\and Gengsheng Wang\thanks{Center for Applied Mathematics, Tianjin University, Tianjin 300072, China (\texttt{wanggs62@yeah.net})}\and Huaiqiang Yu\thanks{School of Mathematics, Tianjin University, Tianjin 300354, China (\texttt{huaiqiangyu@tju.edu.cn})}}

\date{}
\begin{document}
\selectlanguage{english}
\maketitle
\begin{abstract}
This paper studies the time optimal control problem for systems of heat equations coupled
by a pair of  constant matrices. The control constraint is of the ball-type, while the target is the origin of the state space.
We obtain an upper bound for the number of switching points
of the optimal control over each interval with a fixed length. Also, we prove that at each  switching point,
the optimal control jump from one direction to the reverse direction.

\end{abstract}
\vskip 10pt
    \noindent
\textbf{Keywords.}
Time optimal control, system of  heat equations,  switching points, maximum principle
\vskip 10pt
    \noindent
\textbf{2010 Mathematics Subject Classifications.}
93C20, 49B22, 49J20

\section{Introduction}

We start with introducing notation:
    Write $\mathbb{R}^+:=(0,+\infty)$, $\mathbb{N}:=\{0,1,\ldots\}$ and $\mathbb{N}^+:=\{1,2,\ldots\}$.
    Let
    $\Omega\subset\mathbb{R}^N$ (with $N\in\mathbb{N}^+$) be a bounded domain with a $C^2$ boundary $\partial\Omega$. Let
    $\omega\subset\Omega$ be a nonempty and open subset, with its characteristic function
    $\chi_\omega$.
     Let $\triangle$ be the Laplace operator
    with its domain $D(\triangle):=H_0^1(\Omega)\cap H^2(\Omega)$.
      Let
    $\mathbb{I}_k$ (with $k\in \mathbb{N}^+$) be the $k\times k$ identity matrix.
    Given a square matrix $D$, we write $\sigma(D)$ for its spectral.
    Denote by $B_1^m(0)$ (with $m\in \mathbb{N}$) the closed unit ball in
    $L^2(\Omega;\mathbb R^m)$ centered at the origin. Given two Banach spaces $K$ and $F$, write $\mathcal{L}(K;F)$  for the space of all linear bounded  operators from  $K$ to $F$.
    Given $a\in \mathbb{R}$, let $[a]$ be the largest integer less than or equal to  $a$.
    Given a subset $E\subset\mathbb{R}^+$, denote respectively by  $\sharp[E]$ and $|E|$ the cardinality and the measure (if it is measurable) of $E$.
    Write $L^\infty_{x,t}$ for the space $L^\infty(\Omega\times \mathbb{R}^+)$.
     Given $T>0$, let
\begin{eqnarray}\label{def-PC-func}
    \mathcal{PC}([0,T);L^2(\Omega;\mathbb R^m)) &:=& \bigg\{f:[0,T)\to L^2(\Omega;\mathbb R^m)
    ~\big|~ f \;\;\mbox{ has  at most finite discontinuities}
     \nonumber\\
     & &~~~~\mbox{which are of the first kind} \bigg\}.
   \end{eqnarray}
(Here, a discontinuity $\hat t$ of $f$ is called to be of the first kind, if both $\lim_{t\rightarrow \hat t^-} f(t)$
and $\lim_{t\rightarrow \hat t^+} f(t)$ exist.)

\subsection{Control problem}\label{yu-subsection-1-1}
    Let   $A\in\mathbb{R}^{n\times n}$ and $B\in\mathbb{R}^{n\times m}\setminus\{0\}$
    (with $n,m\in \mathbb{N}^+$). Consider the system of coupled heat equations:
\begin{equation}\label{yu-5-16-1}
\begin{cases}
   y_t=(\mathbb{I}_n\triangle+A)y+\chi_{\omega}Bu &\mbox{in}\;\;\Omega\times \mathbb{R}^+,\\
   y=0&\mbox{on}\;\;\partial\Omega\times \mathbb{R}^+,\\
   y(0)=y_0\in L^2(\Omega;\mathbb R^n),
\end{cases}
\end{equation}
   where $u\in L^\infty(\mathbb{R}^+;L^2(\Omega;\mathbb R^m))$. We will treat the solution to
   \eqref{yu-5-16-1} as a function from $[0,+\infty)$ to $L^2(\Omega;\mathbb R^m)$ and denoted it by $y(\cdot;y_0,u)$. Let
    \begin{eqnarray}\label{abbrev-operators}
     \mathcal A := \mathbb{I}_n\triangle+A
     \;\;\mbox{and}\;\;
     \mathcal B := \chi_\omega B.
    \end{eqnarray}
    One can directly check that the operator   $\mathcal A$, with its domain $ H^1_0(\Omega;\mathbb R^n)\cap H^2(\Omega;\mathbb R^n)$, generates an analytic
     semigroup $\{e^{t\mathcal A}\}_{t\geq 0}$ on $L^2(\Omega;\mathbb R^n)$. Then for each $y_0\in L^2(\Omega;\mathbb R^n)$ and each $u\in L^\infty(\mathbb{R}^+;L^2(\Omega;\mathbb R^m))$,
     \begin{eqnarray*}
      y(t;y_0,u)= e^{t\mathcal A} y_0
         + \int_0^t  e^{(t-s)\mathcal A} \mathcal B u(s)ds,
         ~~t\geq 0.
     \end{eqnarray*}

\par
    We next introduce our time optimal control problem:
\begin{equation}\label{original-tp}
    (\mathcal{TP})_{y_0}:\;\;\;\;T^*_{y_0}:=\inf \Big\{\hat{t}>0  ~:~
    \exists\;u\in L^\infty(\mathbb{R}^+;B^m_1(0))
    \;\;\mbox{s.t.}\;y(\hat{t};y_0,u)=0 \Big\},
\end{equation}
where $y_0\in L^2(\Omega;\mathbb R^n)\setminus\{0\}$ is the initial state,  $B_1^m(0)$ is the control constraint set, $\{0\}\in L^2(\Omega;\mathbb R^n)$ is the target set.
    {\it In the above problem, the number $T_{y_0}^*$ is called the optimal time;
     $u\in L^\infty(\mathbb{R}^+;B_1^m(0))$ is called an admissible control  if there is $t\in\mathbb{R}^+$ so that $y(t;y_0,u)=0$;    $u^*_{y_0}\in L^\infty(\mathbb{R}^+;B_1^m(0))$ is called an optimal
       control if $y(T^*_{y_0};y_0,u^*_{y_0})=0$ and $u^*_{y_0}(\cdot)=0$ over $(T^*_{y_0},+\infty)$.
        (The effective domain of $u^*_{y_0}$ is $[0,T^*_{y_0}]$.)
         Thus,  the optimal control to $(\mathcal{TP})_{y_0}$ is unique, if any two optimal controls coincide a.e. over $[0,T^*_{y_0}]$.}

\vskip 5pt
\par
   The main assumption of this paper is as: the initial state $y_0\in L^2(\Omega;\mathbb R^n)\setminus\{0\}$ satisfies
\vskip 5pt
    \noindent ~~~~$\textbf{Assumption (\mbox{A})}_{y_0}$: \emph{The problem $(\mathcal{TP})_{y_0}$ has an admissible control.}
\vskip 5pt
\noindent Several notes on the assumption $\textbf{(\mbox{A})}_{y_0}$ are given in order.
\begin{enumerate}
\item [($a_1$)] The reason that we ask $y_0\neq 0$ is as: when $y_0=0$, the problem $(\mathcal{TP})_{y_0}$
is trivial.
\item [($a_2$)] The assumption $\textbf{(\mbox{A})}_{y_0}$ is equivalent to
that the problem $(\mathcal{TP})_{y_0}$ has an optimal control. (See Theorem 3.11 in \cite[Chapter 3]{Wang-Wang-Xu-Zhang}.)
   \item [($a_3$)] In many cases, $y_0\in L^2(\Omega;\mathbb R^n)\setminus\{0\}$ satisfies $\textbf{(\mbox{A})}_{y_0}$. For instance,  according to \cite[Theorem 3.1]{Phung-Wang-Zhang},
   any $y_0\in L^2(\Omega;\mathbb R^n)\setminus\{0\}$ holds $\textbf{(\mbox{A})}_{y_0}$, provided that
     the system (\ref{yu-5-16-1})  is  null controllable on some  $[0,T]$ and
   $\|e^{t(\mathbb{I}_n\triangle+A)}\|_{\mathcal{L}(L^2(\Omega;\mathbb R^n);L^2(\Omega;\mathbb R^n))}\leq 1$ for each $t\in\mathbb{R}^+$. For more studies on this issue, we refer the readers
    to \cite[Chapter 3]{Wang-Wang-Xu-Zhang}.

\end{enumerate}

We end this subsection with introducing the following subspace:
\begin{equation}\label{yu-5-16-5}
    \mathfrak{L}:=L^2(\Omega;\mathfrak R)
    \;\;\mbox{where}\;\;
    \mathfrak{R}:=  \Bigg\{ \sum_{j=0}^{n-1}A^jBv_j: \{v_j\}_{j=0}^{n-1}\subset\mathbb{R}^m
    \Bigg\}.
\end{equation}
The space $\mathfrak{L}$ is indeed the controllable subspace of the system (\ref{yu-5-16-1}).
 Our Corollary \ref{yu-lemma-5-17-2} says
that if  $y_0$ satisfies $\textbf{(\mbox{A})}_{y_0}$,  then $y_0\in\mathfrak{L}$.

\subsection{Main results}

    We start with the next definition.
\begin{definition}\label{yu-definition-5-25-2}
    Let $T>0$ and $u \in \mathcal{PC}([0,T);L^2(\Omega;\mathbb R^m))$. The number $\hat t\in(0,T)$ is said to be a switching point of $u$, if both $\lim_{t\to \hat{t}^-} u(t)$ and $\lim_{t\to \hat{t}^+} u(t)$ exist
    and
     $\displaystyle\lim_{t\to \hat{t}^-} u(t) \neq \displaystyle\lim_{t\to \hat{t}^+} u(t)$.
    \end{definition}
\par
We next  introduce two important numbers $d_A$ and $q_{A,B}$:
\begin{equation}\label{yu-5-23-12}
    d_A:=\min\left\{\pi/|\mbox{Im}\lambda|:\lambda\in \sigma(A)\right\};
    \end{equation}
\begin{equation}\label{yu-5-23-13}
    q_{A,B}:=\max \bigg\{ \mbox{rank}\, (b,Ab,\ldots,A^{n-1}b) ~:~ b\;\mbox{is a column of}\;B
    \bigg\}.
\end{equation}
In (\ref{yu-5-23-12}), we agree that $1/0=+\infty$, consequently, we have $d_A=+\infty$, when
     $\sigma(A)\subset\mathbb{R}$. In \eqref{yu-5-23-13}, we have  $q_{A,B}\leq n$. The numbers $d_A$ and $q_{A,B}$ were introduced in \cite{Qin-Wang}, where the controllability of impulse controlled
systems of heat equations coupled by constant matrices was studied.

\par
    The main results are now stated as follows:
\begin{theorem}\label{yu-theorem-5-24-1}
    Suppose  $y_0 \in L^2(\Omega;\mathbb R^n)\setminus\{0\}$ satisfies the assumption $(\textbf{A})_{y_0}$.
     Then the following conclusions are true:
\begin{enumerate}
  \item [(i)] The problem $(\mathcal{TP})_{y_0}$ has a unique optimal control $u^*_{y_0}$ satisfying $\|u^*_{y_0}(t)\|_{L^2(\Omega;\mathbb R^m)}=1$ for a.e. $t\in(0, T^*_{y_0})$ (i.e., it has the bang-bang property).
      Moreover, the restriction of  $u^*_{y_0}$ over $[0,T^*_{y_0})$ is in the space $\mathcal{PC}([0,T^*_{y_0});B_1^m(0))$, which is given by \eqref{def-PC-func};
  \item [(ii)] For any open interval $I\subset(0,T^*_{y_0})$ with $|I|\leq d_A$, the optimal control $u^*_{y_0}$ has at most $(q_{A,B}-1)$ switching points in $I$, where $d_A$ and $q_{A,B}$ are given by (\ref{yu-5-23-12}) and (\ref{yu-5-23-13}), respectively.;

  \item [(iii)] Let $\hat t\in (0,T^*_{y_0})$ be a switching point of $u^*_{y_0}$.  Then
\begin{equation*}\label{yu-5-24-5}
    \lim_{t\to \hat{t}^-}u^*_{y_0}(t)+\lim_{t\to \hat{t}^+}u^*_{y_0}(t)=0.
\end{equation*}
\end{enumerate}
\end{theorem}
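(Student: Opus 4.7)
The plan is to unpack Pontryagin's maximum principle for $(\mathcal{TP})_{y_0}$, exploit analyticity of the parabolic semigroup $\{e^{t\mathcal{A}^*}\}$, and reduce the quantitative bound (ii) to a zero-counting statement for a finite-dimensional family of exponential polynomials. Set $\varphi(t):=e^{(T^*_{y_0}-t)\mathcal{A}^*}\varphi_0$ and
\[
f(t):=\chi_\omega B^T\varphi(t)\in L^2(\Omega;\mathbb{R}^m).
\]
By note $(a_2)$ an optimal control exists, so PMP produces a nontrivial $\varphi_0$ with $\langle u^*_{y_0}(t),f(t)\rangle=\max_{v\in B_1^m(0)}\langle v,f(t)\rangle$ a.e.\ on $(0,T^*_{y_0})$. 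The first step is to show $f\not\equiv 0$: otherwise analyticity of $\{e^{t\mathcal{A}^*}\}$ forces $\chi_\omega B^T e^{s\mathcal{A}^*}\varphi_0\equiv 0$ for all $s\geq 0$, and the observability--controllability duality (combined with Corollary~\ref{yu-lemma-5-17-2}, which places $y_0\in\mathfrak{L}$) contradicts transversality. Off the zero set $Z:=\{t:f(t)=0\}$ we then read off $u^*_{y_0}(t)=f(t)/\|f(t)\|_{L^2(\Omega;\mathbb{R}^m)}$, which is the bang-bang identity; uniqueness follows by the usual convexity route, since any second optimum is bang-bang for the same reason and strict convexity of $L^2$ forces agreement.

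For the $\mathcal{PC}$ part of (i) and for (iii), I use that $f:[0,T^*_{y_0}]\to L^2(\Omega;\mathbb{R}^m)$ is real-analytic (because $\{e^{t\mathcal{A}^*}\}$ is analytic), so $Z$ is finite in this compact interval. At each $\hat t\in Z$ a Banach-valued Taylor expansion produces the smallest integer $k\geq 1$ and an analytic $g$ with $g(\hat t)\neq 0$ so that $f(t)=(t-\hat t)^k g(t)$ near $\hat t$; hence
\[
\lim_{t\to\hat t^\pm}u^*_{y_0}(t)=\lim_{t\to\hat t^\pm}\frac{(t-\hat t)^k g(t)}{|t-\hat t|^k\,\|g(t)\|}=(\pm 1)^k\,\frac{g(\hat t)}{\|g(\hat t)\|_{L^2(\Omega;\mathbb{R}^m)}}.
\]
Both one-sided limits exist (completing (i)), and $\hat t$ is a switching point iff $k$ is odd, in which case the two limits sum to zero; that is (iii).

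For (ii) I reduce the $L^2$-valued zero count to a scalar one. Since $\mathbb{I}_n\triangle$ and $A^T$ commute on $L^2(\Omega;\mathbb{R}^n)$, I split $\varphi(x,t)=e^{(T^*_{y_0}-t)A^T}\psi(x,t)$ with $\psi$ the componentwise backward heat evolution of $\varphi_0$, and expand $\varphi_0=\sum_k e_k\xi_k$ in the Dirichlet eigenbasis of $-\triangle$. Then for each column $b_j$ of $B$,
\[
b_j^T\varphi(x,t)=\sum_k e_k(x)\,e^{-\lambda_k(T^*_{y_0}-t)}\,h_{j,k}(T^*_{y_0}-t),\qquad h_{j,k}(s):=b_j^T e^{sA^T}\xi_k.
\]
At any switching point $\hat t\in I$ one has $\chi_\omega b_j^T\varphi(\cdot,\hat t)=0$ in $L^2(\omega)$; since $b_j^T\varphi(\cdot,\hat t)$ is real-analytic in $x$ on the connected set $\Omega$ (parabolic regularisation applies because $\hat t<T^*_{y_0}$), the identity theorem forces it to vanish on all of $\Omega$, and orthonormality of $\{e_k\}$ then yields $h_{j,k}(T^*_{y_0}-\hat t)=0$ for every $j$ and every $k$.

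The main obstacle is the ensuing finite-dimensional zero-counting lemma. Because $e^{sA}b_j$ stays inside the cyclic subspace $V_j=\mathrm{span}\{b_j,Ab_j,\ldots,A^{q_j-1}b_j\}$ with $q_j:=\mathrm{rank}(b_j,\ldots,A^{n-1}b_j)\leq q_{A,B}$, the scalar functions $s\mapsto h_{j,k}(s)$ all lie (as $\xi_k$ varies) in a fixed $q_j$-dimensional space $\mathcal{E}_j$ of exponential polynomials whose exponents are eigenvalues of $A|_{V_j}\subset\sigma(A)$. Since $f\not\equiv 0$ forces some $\xi_k$ not to lie in $\mathfrak{R}^\perp$, there exist indices $j_0,k_0$ with $h_{j_0,k_0}\not\equiv 0$; each switching point in $I$ is then a zero of $h_{j_0,k_0}$ in a time interval of length at most $d_A=\pi/\max|\mathrm{Im}\,\lambda|$, i.e.\ strictly shorter than the half-period of the most oscillatory exponential in $\mathcal{E}_{j_0}$. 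The Chebyshev/P\'olya-type zero bound in this regime---exactly the estimate developed around $d_A$ and $q_{A,B}$ in \cite{Qin-Wang}---gives at most $q_{j_0}-1\leq q_{A,B}-1$ zeros, yielding (ii). Aligning the oscillation constraint with the dimension count so that it sharply delivers $q_{A,B}-1$ (rather than a coarser bound) is the delicate step.
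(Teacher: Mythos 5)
Your overall architecture (maximum principle $\Rightarrow$ normalized switching function $\Rightarrow$ Taylor expansion at its zeros $\Rightarrow$ zero-counting via $d_A$ and $q_{A,B}$) matches the paper's, and two of your steps are genuinely cleaner than the paper's: the direct $L^2$-valued Taylor factorization $f(t)=(t-\hat t)^k g(t)$ with $g(\hat t)\neq 0$ gives (iii) and the one-sided limits in one stroke, bypassing the paper's delicate verification that $\chi_\omega$ does not kill the leading coefficient (its Steps~1--3, $p(i)=\hat p(i)$); and your spatial real-analyticity/identity-theorem argument plus orthonormality of the eigenbasis is a legitimate substitute for the paper's Lemma~\ref{yu-lemma-5-23-2}. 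The uniqueness-by-strict-convexity argument is also fine. However, there is a genuine gap at the very first step: you invoke a \emph{global} Pontryagin maximum principle, i.e.\ a single nontrivial multiplier $\varphi_0$ at time $T^*_{y_0}$ with the maximum condition holding a.e.\ on all of $(0,T^*_{y_0})$. The paper explicitly warns that the standard maximum principle may not hold for $(\mathcal{TP})_{y_0}$; what is available is only the \emph{local} maximum principle of Proposition~\ref{thm-TP-PMP}, which produces a different multiplier $\xi_T$ for each $T<T^*_{y_0}$, and even that requires first passing to the Kalman-controllable reduced system \eqref{yu-5-18-9} (Lemmas~\ref{yu-proposition-5-18-1} and \ref{lem-PMP-reduced}, which in turn need the $L^\infty$ null controllability of Proposition~\ref{yu-lemma-5-18-2} to separate the reachable set from the constrained controllable set). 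Your proposal omits all of this; the existence and nontriviality of your $\varphi_0$ (and hence $f\not\equiv 0$, which you dismiss with an appeal to ``transversality'') is precisely the hard point. With only the local principle one must also patch the representations $u^*_{y_0}|_{(0,T)}=f_{\xi_T}$ together via the consistency relation \eqref{f-uniqueness}, which your argument has no need for but also cannot avoid.

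A second, smaller gap: you claim $f$ is real-analytic on the \emph{compact} interval $[0,T^*_{y_0}]$ and conclude that its zero set is finite. Analyticity of $t\mapsto e^{(T^*_{y_0}-t)\mathcal{A}^*}\varphi_0$ holds only for $t<T^*_{y_0}$ (positive time-to-go); at the right endpoint there is no smoothing, so a priori the zeros of $f$ could accumulate at $T^*_{y_0}$ and compactness buys you nothing. Finiteness of the zero set on $[0,T^*_{y_0})$ --- which is what puts $u^*_{y_0}$ in $\mathcal{PC}([0,T^*_{y_0});B_1^m(0))$ --- requires the uniform quantitative bound of Corollary~\ref{cor-switch-behavior} (at most $([T/d_A]+1)(q_{A,B}-1)$ zeros on $(0,T)$, uniformly in $T<T^*_{y_0}$), not a qualitative analyticity argument. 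Finally, the terminal zero-counting step (a nonzero element of the $q_{j_0}$-dimensional exponential-polynomial space has at most $q_{j_0}-1$ zeros on an interval of length $\leq d_A$) is exactly \cite[Theorem~2.2]{Qin-Wang}; you cite it rather than prove it, which is acceptable since the paper does the same, but you should be aware that this, together with the maximum principle, is where all the real work lives.
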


    Several notes on Theorem \ref{yu-theorem-5-24-1} are listed in order.
\begin{enumerate}
  \item [($b_1$)] From $(i)$ of Theorem \ref{yu-theorem-5-24-1}, we  see that
$u^*_{y_0}\in \mathcal{PC}([0,T^*_{y_0});B_1^m(0))$. It is natural to ask the behaviour of
 $u^*_{y_0}$ at $T^*_{y_0}$. Unfortunately, this is a very hard problem for us.

\item[($b_2$)]
In $(ii)$ of Theorem \ref{yu-theorem-5-24-1}, we only give an upper bound for the number of switching points of $u^*_{y_0}$ {\it in any open subinterval $I\subset(0,T^*_{y_0})$ with $|I|\leq d_A$. }
How to get a global upper bound over $(0,T^*_{y_0})$ is extremely hard for us. However, for the special case that
$\sigma(A)\subset\mathbb{R}$, we have $d_A=+\infty$, and thus $u^*_{y_0}$ has at most $(q_{A,B}-1)$ switching points over the whole the interval $(0,T^*_{y_0})$.

\item[($b_3$)] For the pure heat equation on $\Omega$, i.e., $A=0$ and $B=\mathbb{I}_1$, we see from
            $(ii)$ of Theorem \ref{yu-theorem-5-24-1} that the corresponding time optimal control $u^*_{y_0}$ has no any switching point, and thus  $u^*_{y_0}$ is continuous over $[0,T^*_{y_0})$. Indeed, in this case, $d_{A}=+\infty$ (see the statement in ($b_2$)) and $q_{A,B}=1$.
           {\it From this, we can say that the coupling causes switching points. }
\item [($b_4$)] The conclusion $(iii)$ in Theorem \ref{yu-theorem-5-24-1} says that
    the optimal control  jumps from one direction to its reverse direction at each switching point.

\end{enumerate}

\subsection {Comparison with related works}
To our best knowledge, the studies on the switching points for time optimal controls governed by
PDEs have not been touched upon. There have been some literatures on the related studies
for ODEs, for instance, \cite{Pontryagin, Poggiolini, Qin-Wang-Yu, Sontag, Sussmann} and references therein.
We would like mention, in particular, the work \cite{Qin-Wang-Yu}, where the similar problem was studied
and the similar results were  obtained for ODEs. However, it is not   easy   to extend
results from finite-dimensional systems to infinite-dimensional systems. Indeed, to study the  switching points
for the problem $(\mathcal{TP})_{y_0}$, we built up an $L^\infty$ null controllability for \eqref{yu-subsection-1-1}, used some point-wise unique continuation  to the dual system of (\ref{yu-5-16-1}),
and utilized some results  obtained in   \cite{Qin-Wang}.
 With regard to time optimal controls for parabolic equations, we would like mention \cite{Barbu, Carja, Cannarsa-Carja, Gozzi-Loreti, Wang-2008, Lv-Wang, Wang-Wang,  Wang-Zheng, Wang-Zuazua, Yu} and the references therein.

\subsection{Plan of this paper}
    The rest of the paper is organized as follows: Section 2
    gives some auxiliary results; Section 3 proves the main theorem; Section 4 presents an example.

\section{Auxiliary results}

\subsection{Decomposition of the system}

This subsection  presents a decomposition of the system \eqref{yu-5-16-1}  from perspective of the controllability. We starts with introducing the following  well-known Kalman controllability decomposition
for ODEs (see  \cite[Lemma 3.3, p.93]{Sontag}):
 \begin{lemma}
 \label{lem-Kalman-controllability-decomposition}
 Let $\mathfrak R$ be given in  (\ref{yu-5-16-5}) with $k\triangleq$dim\,$\mathfrak R$. Then there is an invertible matrix $P\in \mathbb R^{n\times n}$ with $P^\top=P^{-1}$ and four matrices $A_1\in\mathbb{R}^{k\times k}$, $A_2\in\mathbb{R}^{k\times (n-k)}$, $A_3\in\mathbb{R}^{(n-k)\times (n-k)}$,  $B_1\in\mathbb{R}^{k\times m}$ so that
\begin{equation}\label{yu-5-18-2}
P^{-1} \mathfrak R=\mathbb R^k \times\{0\},~~
    P^{-1}AP =\left(
                \begin{array}{cc}
                  A_1 & A_2 \\
                  0 & A_3 \\
                \end{array}
              \right)
\;\;\mbox{and}\;\;
              P^{-1}B=\left(
                               \begin{array}{c}
                                 B_1 \\
                                 0 \\
                               \end{array}
                             \right),
\end{equation}
and so that
\begin{equation}\label{yu-5-18-3}
    \mbox{rank}\,(B_1,A_1B_1,\cdots,A_1^{k-1}B_1)=k.
\end{equation}
(Here, it is agreed that $A_2,A_3$ are not there if $k=n$.)
 \end{lemma}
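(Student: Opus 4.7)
The plan is to obtain $P$ by choosing an orthonormal basis adapted to the $A$-invariant subspace $\mathfrak{R}$, and then read off the block structure of the lemma from how $A$ and $B$ act relative to this basis.

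The first step is to observe that $\mathfrak{R}$ is an $A$-invariant subspace of $\mathbb{R}^n$ of dimension $k$. Invariance is immediate by Cayley--Hamilton: for any $w = \sum_{j=0}^{n-1} A^j B v_j \in \mathfrak{R}$, the vector $Aw = \sum_{j=0}^{n-1} A^{j+1} B v_j$ belongs to $\mathfrak{R}$ because $A^n$ is a polynomial combination of $\mathbb{I}_n, A, \ldots, A^{n-1}$. I would also note that every column of $B$ lies in $\mathfrak{R}$ (take $v_0 = $ the corresponding standard basis vector and $v_j = 0$ for $j \geq 1$), hence $\operatorname{Range}(B) \subset \mathfrak{R}$.

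Next, I would pick an orthonormal basis $\{e_1, \ldots, e_k\}$ of $\mathfrak{R}$ and extend it to an orthonormal basis $\{e_1, \ldots, e_n\}$ of $\mathbb{R}^n$ (possible by Gram--Schmidt). Let $P$ be the $n \times n$ matrix with columns $e_1, \ldots, e_n$; it is orthogonal, so $P^\top = P^{-1}$, and by construction $P^{-1}\mathfrak{R} = \mathbb{R}^k \times \{0\}$. For $1 \leq j \leq k$, the $A$-invariance of $\mathfrak{R}$ forces $A e_j \in \mathfrak{R} = \operatorname{span}\{e_1, \ldots, e_k\}$, so the $j$-th column of $P^{-1} A P$ has no entries in rows $k+1, \ldots, n$; this yields the block form of $P^{-1} A P$ in \eqref{yu-5-18-2}. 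Similarly, $\operatorname{Range}(B) \subset \mathfrak{R}$ gives the claimed form of $P^{-1} B$.

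For the final Kalman rank identity \eqref{yu-5-18-3}, I would compute $(P^{-1}AP)^j (P^{-1}B)$ using the block upper-triangular structure and verify inductively that this equals the block vector with $A_1^j B_1$ on top and $0$ below. Consequently
\begin{equation*}
P^{-1}\bigl(B, AB, \ldots, A^{n-1} B\bigr) = \begin{pmatrix} B_1 & A_1 B_1 & \cdots & A_1^{n-1} B_1 \\ 0 & 0 & \cdots & 0 \end{pmatrix}.
\end{equation*}
Since the columns of $(B, AB, \ldots, A^{n-1}B)$ span $\mathfrak{R}$, which has dimension $k$, and $P$ is invertible, the top block has rank $k$. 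Finally, Cayley--Hamilton applied to the $k \times k$ matrix $A_1$ shows that $A_1^k, \ldots, A_1^{n-1}$ are polynomials of degree $< k$ in $A_1$, so the $n$-term block collapses to $(B_1, A_1 B_1, \ldots, A_1^{k-1} B_1)$ without dropping rank, giving \eqref{yu-5-18-3}.

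There is no real obstacle: the argument is pure finite-dimensional linear algebra. The only point requiring a little care is ensuring that $P$ is chosen \emph{orthogonal} (to secure $P^\top = P^{-1}$), which is why I insist on Gram--Schmidt rather than an arbitrary basis extension.
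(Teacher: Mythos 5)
Your proof is correct. The paper does not actually prove this lemma---it is quoted as the well-known Kalman controllability decomposition with a citation to Sontag's textbook---and your argument is precisely the standard one that reference supplies: $A$-invariance of $\mathfrak R$ via Cayley--Hamilton, an orthonormal adapted basis (which correctly secures $P^\top=P^{-1}$), the resulting block-triangular forms, and the rank identity obtained by noting that the columns of $(B,AB,\ldots,A^{n-1}B)$ span the $k$-dimensional space $\mathfrak R$ and then collapsing the powers of $A_1$ by Cayley--Hamilton. All steps check out, including the degenerate case $k=n$.
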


With the help of  Lemma \ref{lem-Kalman-controllability-decomposition}, we have the following decomposition for  the system (\ref{yu-5-16-1}):

\begin{proposition}\label{prop-relation-semigroups}
Let the matrices $P$, $\{A_j\}_{j=1}^3$ and $B_1$ be given in Lemma \ref{lem-Kalman-controllability-decomposition}. Then for each $t\geq0$,
\begin{eqnarray}\label{relation-C0-two}
 P^{-1}e^{t\mathcal A}P = \left(
                \begin{array}{cc}
                  e^{t(\mathbb{I}_k\triangle+A_1 )} & M(t)\\
                  0 & e^{t(\mathbb{I}_{n-k}\triangle+A_3)} \\
                \end{array}
              \right)
 \;\;\mbox{and}\;\;
~P^{-1} e^{t\mathcal A} \mathcal B=
\left(
 \begin{array}{c}
  e^{t(\mathbb{I}_k\triangle+A_1 )}\chi_\omega B_1 \\
  0  \\
 \end{array}
\right),
\end{eqnarray}
where the operator $M(t)$ is as:
\begin{eqnarray*}
M(t):=  e^{t \mathbb I_k \triangle } \int_0^t e^{(t-s)A_1} A_2  e^{s A_3} ds.
\end{eqnarray*}

\end{proposition}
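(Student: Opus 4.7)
The plan is to reduce the computation of the semigroup $e^{t\mathcal A}$ to a purely algebraic manipulation on the matrix part by exploiting the fact that the constant matrix $A$ and the Laplacian $\mathbb I_n\triangle$ act on different ``factors'' of $L^2(\Omega;\mathbb R^n)\simeq L^2(\Omega)\otimes\mathbb R^n$, and therefore commute, while the orthogonal transformation $P$ leaves the Laplace part untouched.

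First I would verify the conjugation step: since $P$ acts componentwise on $L^2(\Omega;\mathbb R^n)$ and $\mathbb I_n\triangle$ acts diagonally on the components, one has $P^{-1}(\mathbb I_n\triangle)P=\mathbb I_n\triangle$, hence
\begin{equation*}
P^{-1}\mathcal A P=\mathbb I_n\triangle+P^{-1}AP=\mathbb I_n\triangle+\begin{pmatrix}A_1&A_2\\ 0&A_3\end{pmatrix}=:\widetilde{\mathcal A}.
\end{equation*}
Because $P$ is bounded and invertible, the family $\{P^{-1}e^{t\mathcal A}P\}_{t\geq 0}$ is a $C_0$-semigroup whose generator is $\widetilde{\mathcal A}$ on the domain $P^{-1}D(\mathcal A)=D(\widetilde{\mathcal A})$; by uniqueness of the semigroup generated by a given operator, $P^{-1}e^{t\mathcal A}P=e^{t\widetilde{\mathcal A}}$.

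Next I would compute $e^{t\widetilde{\mathcal A}}$ explicitly. The bounded multiplication operator $\widetilde A=\bigl(\begin{smallmatrix}A_1&A_2\\ 0&A_3\end{smallmatrix}\bigr)$ commutes with $\mathbb I_n\triangle$ (they act on the ``spatial'' and the ``matrix'' factors respectively), so by the standard commuting-generator formula
\begin{equation*}
e^{t\widetilde{\mathcal A}}=e^{t\mathbb I_n\triangle}\,e^{t\widetilde A}.
\end{equation*}
For the finite-dimensional block upper triangular exponential, a direct variation of constants (or differentiation in $t$) gives
\begin{equation*}
e^{t\widetilde A}=\begin{pmatrix}e^{tA_1}&\displaystyle\int_0^t e^{(t-s)A_1}A_2\,e^{sA_3}\,ds\\[4pt] 0&e^{tA_3}\end{pmatrix}.
\end{equation*}
Multiplying by $e^{t\mathbb I_n\triangle}$, which is block-diagonal with entries $e^{t\mathbb I_k\triangle}$ and $e^{t\mathbb I_{n-k}\triangle}$, and recombining the heat factor on the diagonal with the commuting matrix $e^{tA_j}$ into $e^{t(\mathbb I_k\triangle+A_1)}$ and $e^{t(\mathbb I_{n-k}\triangle+A_3)}$, yields exactly the asserted block form with the stated $M(t)$.

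Finally, for the second identity, I would note that $P^{-1}\mathcal B$ is the pointwise (in $x$) action of $P^{-1}B$ multiplied by $\chi_\omega$, so $P^{-1}\mathcal B=\chi_\omega P^{-1}B=\bigl(\begin{smallmatrix}\chi_\omega B_1\\ 0\end{smallmatrix}\bigr)$; applying $P^{-1}e^{t\mathcal A}P$ to this column and using that the lower block vanishes kills the $M(t)$ contribution, producing the required expression. I expect the only mild subtlety to be the rigorous justification of $P^{-1}e^{t\mathcal A}P=e^{tP^{-1}\mathcal A P}$ in the unbounded setting (which is why I would argue through equality of generators rather than through a power series), together with the commutation $[\mathbb I_n\triangle,\widetilde A]=0$ that legitimises the factorisation $e^{t\widetilde{\mathcal A}}=e^{t\mathbb I_n\triangle}e^{t\widetilde A}$; everything else is routine matrix calculus.
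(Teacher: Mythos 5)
Your proposal is correct and follows essentially the same route as the paper: the paper invokes the product formula $e^{t\mathcal A}=e^{t\mathbb I_n\triangle}e^{tA}$ (cited from Qin--Wang) and then conjugates by $P$, whereas you conjugate the generator first and justify the factorisation via commutation, but the computational core (the block upper-triangular matrix exponential with the variation-of-constants off-diagonal block, and the commutation of the Laplacian with constant matrices) is identical. Your explicit derivation of the second identity by applying the first to $P^{-1}\mathcal B=(\chi_\omega B_1,0)^\top$ is a fine substitute for the paper's ``proved in a very similar way.''
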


\begin{proof}
Arbitrarily fix $t\geq 0$.
Since $e^{t\mathcal A}=e^{t\mathbb I_n \triangle} e^{tA}$ (see for instance \cite[Proposition 3.1]{Qin-Wang}), it follows from the first equality in (\ref{yu-5-18-2}) that
\begin{eqnarray*}
  P^{-1}e^{t\mathcal A}P &=& P^{-1} \left( e^{t\mathbb I_n \triangle}  e^{tA} \right)P
  =e^{t\mathbb I_n \triangle}  \left( P^{-1} e^{tA}P \right)
  \nonumber\\
  &=& \left(
        \begin{array}{cc}
          e^{t\mathbb I_k \triangle} & 0 \\
          0 & e^{t\mathbb I_{n-k} \triangle} \\
        \end{array}
      \right)
      \left(
                \begin{array}{cc}
                  e^{tA_1 } & \int_0^t e^{(t-s)A_1} A_2  e^{s A_3} ds\\
                  0 & e^{tA_3} \\
                \end{array}
              \right)
  \nonumber\\
  &=& \left(
                \begin{array}{cc}
                  e^{t\mathbb{I}_k\triangle} e^{tA_1 } & e^{t\mathbb{I}_k\triangle} \int_0^t e^{(t-s)A_1} A_2  e^{s A_3} ds\\
                  0 & e^{t\mathbb{I}_{n-k}\triangle} e^{tA_3} \\
                \end{array}
              \right)
  =\left(
                \begin{array}{cc}
                  e^{t(\mathbb{I}_k\triangle+A_1 )} & M(t)\\
                  0 & e^{t(\mathbb{I}_{n-k}\triangle+A_3)} \\
                \end{array}
              \right).
\end{eqnarray*}
This leads to the first equality in (\ref{relation-C0-two}). The second one in (\ref{relation-C0-two}) can be proved in a very similar way. This finishes the proof of Proposition \ref{prop-relation-semigroups}.
\end{proof}

\begin{corollary}\label{yu-lemma-5-17-2}
Suppose that  $y_0\in L^2(\Omega;\mathbb R^n)\setminus\{0\}$ satisfies
the assumption $(\textbf{A})_{y_0}$.
Then the following conclusions are true:
    \begin{enumerate}
      \item[(i)] It holds that $0<T^*_{y_0}<+\infty$;

      \item[(ii)] There is a unique $\hat y_0 \in L^2(\Omega;\mathbb R^k)$ so that  $P^{-1}y_0=(\hat y_0,0)^{\top}$, where $P$ is given in Lemma \ref{lem-Kalman-controllability-decomposition}.
          In particular,  $y_0\in \mathfrak{L}$, where  $\mathfrak{L}$ is given by \eqref{yu-5-16-5}.
    \end{enumerate}

\end{corollary}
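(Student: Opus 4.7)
\textbf{Proof plan for Corollary~\ref{yu-lemma-5-17-2}.} First I would handle (i). Finiteness $T^*_{y_0}<+\infty$ is immediate: by assumption $(\textbf{A})_{y_0}$ there is an admissible control reaching $0$ at some positive time, which is an upper bound for the infimum. For strict positivity I would argue by contradiction using the mild-solution formula. If $T^*_{y_0}=0$, pick a sequence of admissible pairs $(u_n,t_n)$ with $t_n\to 0^+$ and $y(t_n;y_0,u_n)=0$; writing
\begin{equation*}
e^{t_n\mathcal A}y_0 \;=\; -\int_0^{t_n} e^{(t_n-s)\mathcal A}\mathcal B u_n(s)\,ds
\end{equation*}
and using $\|u_n(s)\|_{L^2(\Omega;\mathbb R^m)}\le 1$ together with boundedness of $\|e^{s\mathcal A}\mathcal B\|$ on $[0,1]$, the right-hand side tends to $0$ in $L^2(\Omega;\mathbb R^n)$ as $t_n\to 0^+$, while the left-hand side tends to $y_0\neq 0$ by strong continuity of the semigroup---contradiction.

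For (ii) I would pick any admissible control $u$ with target time $T\in(0,+\infty)$ and apply $P^{-1}$ to the mild-solution formula, inserting $\mathbb{I}_n=PP^{-1}$ so that Proposition~\ref{prop-relation-semigroups} applies. Writing $P^{-1}y_0=(y_0^{(1)},y_0^{(2)})^\top$ with $y_0^{(1)}\in L^2(\Omega;\mathbb R^k)$ and $y_0^{(2)}\in L^2(\Omega;\mathbb R^{n-k})$, the second component of the identity $P^{-1}y(T;y_0,u)=0$ reduces to
\begin{equation*}
e^{T(\mathbb I_{n-k}\triangle+A_3)}y_0^{(2)} \;=\; 0,
\end{equation*}
because the lower block of $P^{-1}e^{s\mathcal A}\mathcal B$ vanishes identically by Proposition~\ref{prop-relation-semigroups}.

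The key reduction is now to extract $y_0^{(2)}=0$ from this. Since $A_3$ is a constant matrix and commutes (pointwise in $x$) with the componentwise Dirichlet Laplacian, the semigroup factorises as $e^{t(\mathbb I_{n-k}\triangle+A_3)}=e^{t\mathbb I_{n-k}\triangle}\,e^{tA_3}$; the matrix exponential $e^{TA_3}$ is invertible, so the displayed identity is equivalent to $e^{T\triangle}w_i=0$ for every scalar component $w_i\in L^2(\Omega)$ of $w:=e^{TA_3}y_0^{(2)}$. Injectivity of the scalar Dirichlet heat semigroup (standard backward uniqueness, or the spectral expansion in eigenfunctions of $-\triangle$) yields $w=0$, hence $y_0^{(2)}=0$. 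Setting $\hat y_0:=y_0^{(1)}$, uniqueness is then forced by invertibility of $P$. The inclusion $y_0\in\mathfrak L$ follows from $y_0(x)=P(\hat y_0(x),0)^\top\in P(\mathbb R^k\times\{0\})=\mathfrak R$ for a.e.\ $x\in\Omega$, via the first identity in \eqref{yu-5-18-2}.

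Conceptually the crux is recognising that the decomposition in Proposition~\ref{prop-relation-semigroups} isolates an autonomous, uncontrollable second block whose evolution is governed by a backward-unique semigroup; once this is spotted, the inputs (the decomposition, invertibility of matrix exponentials, injectivity of the heat semigroup) are all standard. The one calculation that might give pause is the commutation used to factorise $e^{t(\mathbb I_{n-k}\triangle+A_3)}$, but this holds because $A_3$ acts as a constant matrix at every $x\in\Omega$.
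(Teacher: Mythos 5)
Your proposal is correct and follows essentially the same route as the paper: apply $P^{-1}$ to the mild-solution identity at the target time, use the block-triangular structure from Proposition~\ref{prop-relation-semigroups} to isolate $e^{T(\mathbb I_{n-k}\triangle+A_3)}y_0^{(2)}=0$, and conclude $y_0^{(2)}=0$ by injectivity of that semigroup. The only differences are cosmetic: where the paper cites external results (the textbook for $0<T^*_{y_0}<+\infty$ and \cite[(ii) of Proposition 3.2]{Qin-Wang} for the injectivity), you supply short self-contained arguments, both of which are valid.
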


\begin{proof}
From $(\textbf{A})_{y_0}$, one can easily derive the conclusion $(i)$ (see ($a_2$) in Section \ref{yu-subsection-1-1}). We next prove the conclusion $(ii)$.
    When $k=n$, we have $\mathfrak{L}=L^2(\Omega;\mathbb R^n)$, which leads to $(ii)$.
       We now suppose that $k<n$. Write
\begin{eqnarray}\label{y0-y1-y2}
 P^{-1} y_0 :=(y_1,y_2)^\top,
 \;\;\mbox{with}\;\;
 y_1 \in L^2(\Omega;\mathbb R^k)
 \;\;\mbox{and}\;\;
 y_2 \in L^2(\Omega;\mathbb R^{n-k}).
\end{eqnarray}
By $(\textbf{A})_{y_0}$, we can find $u\in L^\infty(\mathbb R^+;L^2(\Omega;\mathbb R^m))$ and $\hat t>0$ so that
\begin{eqnarray*}
0&=& y(\hat t; y_0,u)= e^{\hat t\mathcal A}y_0
+ \int_0^{\hat t}  e^{(\hat t-s)\mathcal A} \mathcal B u(s) ds
\nonumber\\
&=& P \Big[ \big( P^{-1} e^{\hat t\mathcal A} P \big) P^{-1}y_0
+ \int_0^{\hat t}  \big( P^{-1}e^{(\hat t-s)\mathcal A} \mathcal B \big) u(s) ds
\Big]
= P \left(
      \begin{array}{c}
        \cdots \\
        e^{\hat t(\mathbb{I}_{n-k}\Delta + A_3)} y_2  \\
      \end{array}
    \right).
\end{eqnarray*}
This yields that
\begin{eqnarray*}
0=e^{\hat t(\mathbb{I}_{n-k}\Delta + A_3)} y_2.
\end{eqnarray*}
   From this  and \cite[$(ii)$ of Proposition 3.2]{Qin-Wang}, it follows that $y_2=0$.
    Then the conclusion $(ii)$ follows from (\ref{y0-y1-y2}) and the first equation in (\ref{yu-5-18-2}).  This
    completes  the proof of Corollary \ref{yu-lemma-5-17-2}.
\end{proof}

\subsection{Null controllability of the system}
This subsection studies the $L^\infty_{x,t}$ null controllability for the system (\ref{yu-5-16-1}).
\begin{proposition}\label{yu-lemma-5-18-2}
   Let $\mathcal A$ and $\mathcal B$ be given by (\ref{abbrev-operators}).  Then the following three statements are equivalent:
\begin{enumerate}
  \item [(i)] It holds that $\mbox{rank}\;(B,AB,\cdots,A^{n-1}B)=n$;
  \item [(ii)]  The system (\ref{yu-5-16-1}),  with $L^\infty_{x,t}$-controls,  is null controllable, i.e., for each $T>0$ and each $y_0\in L^2(\Omega;\mathbb R^n)$, there is $u\in L^\infty(\Omega\times \mathbb R^+;\mathbb R^m)$ so that $y(T;y_0,u)=0$.
  \item [(iii)] There is $C=C(\Omega,\omega,A,B)>0$ so that for each $T>0$,
  \begin{eqnarray*}
    \| e^{T\mathcal A^*} z\|_{L^2(\Omega;\mathbb R^n)}
    \leq   Ce^{ \frac{C}{T} }
     \int_0^T \| \mathcal B^* e^{t \mathcal A^* } z \|_{ L^1(\Omega;\mathbb R^m) } dt\;\;\mbox{for each}\;\;
    z\in L^2(\Omega;\mathbb R^n).
  \end{eqnarray*}

\end{enumerate}
\end{proposition}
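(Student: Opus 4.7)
I would prove the equivalence via the cycle $(ii)\Rightarrow(i)\Rightarrow(iii)\Rightarrow(ii)$. The first implication follows directly from Corollary~\ref{yu-lemma-5-17-2}; the third is a standard Fenchel duality; the middle implication is the technical core, requiring one to combine the Kalman rank condition with the sharp parabolic cost-of-control $e^{C/T}$ and to control the observation in an $L^1$-in-space norm.

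For $(ii)\Rightarrow(i)$ I argue by contraposition. Assume $\mathrm{rank}(B,AB,\ldots,A^{n-1}B)<n$. Then, in the notation of~\eqref{yu-5-16-5}, $\mathfrak L$ is a proper closed subspace of $L^2(\Omega;\mathbb R^n)$, and we pick any $y_0\notin\mathfrak L$. Since $\Omega$ is bounded, any $u\in L^\infty(\Omega\times\mathbb R^+;\mathbb R^m)$ also lies in $L^\infty(\mathbb R^+;L^2(\Omega;\mathbb R^m))$, so the existence of an $L^\infty_{x,t}$ null control provided by $(ii)$ would verify $(\textbf{A})_{y_0}$. But then Corollary~\ref{yu-lemma-5-17-2} would force $y_0\in\mathfrak L$, a contradiction.

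For $(iii)\Rightarrow(ii)$ I would fix $T>0$ and $y_0\in L^2(\Omega;\mathbb R^n)$ and minimize the convex functional
\begin{equation*}
J_T(z) := \tfrac12\Bigl(\int_0^T \|\mathcal B^* e^{s\mathcal A^*}z\|_{L^1(\Omega;\mathbb R^m)}\,ds\Bigr)^{\!2} + \langle y_0,\,e^{T\mathcal A^*}z\rangle_{L^2(\Omega;\mathbb R^n)}
\end{equation*}
over $z\in L^2(\Omega;\mathbb R^n)$. The inequality $(iii)$ is exactly the coercivity statement that delivers a minimizer $z_T^*$; unpacking its Euler--Lagrange relation and selecting measurably from the subgradient of the $L^1$-norm produces a control of the form $u^*(s,x) = \Lambda(s,x)\,\mathrm{sgn}\bigl(B^\top e^{(T-s)\mathcal A^*}z_T^*(x)\bigr)$ with $\|u^*\|_{L^\infty_{x,t}} \leq C e^{C/T}\|y_0\|_{L^2}$, and this is the desired $L^\infty_{x,t}$ null control. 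This is the classical Fenchel duality between $L^1$-observability and $L^\infty$-controllability.

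The substantive implication is $(i)\Rightarrow(iii)$. Under the Kalman rank condition, the coupled system fits into the controllability framework of \cite{Qin-Wang}, from which one can extract the $L^2$-in-space observability
\begin{equation*}
\|e^{T\mathcal A^*}z\|_{L^2(\Omega;\mathbb R^n)}^{\,2} \leq C e^{C/T} \int_0^T \|\mathcal B^* e^{s\mathcal A^*}z\|_{L^2(\Omega;\mathbb R^m)}^{\,2}\,ds
\end{equation*}
by Carleman estimates with the rank condition propagated through the coupling matrix $A^\top$ on the dual side. The task is then to upgrade the right-hand side from an $L^2$-in-space norm to the $L^1$-in-space norm demanded by $(iii)$. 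My plan is to split $[0,T]$ at $T/2$, apply this $L^2$ observability on $[T/2,T]$, and for each $s\in[T/2,T]$ decompose $\mathcal B^* e^{s\mathcal A^*}z = \mathcal B^* e^{(s-s')\mathcal A^*}e^{s'\mathcal A^*}z$ and invoke an interior parabolic ultracontractivity estimate (valid for the coupled system by perturbing the heat kernel with the bounded matrix $A^\top$) to trade the $L^2$-in-space norm at time $s$ against an $L^1$-in-space norm of the observation at earlier times $s'\in[0,T/2]$, then integrate in $s'$. The main obstacle is keeping the smoothing estimate genuinely local on $\omega$ so that no unavailable unique-continuation result is needed, and tracking all polynomial-in-$T$ losses carefully enough to absorb them into the exponential factor $e^{C/T}$ required by $(iii)$.
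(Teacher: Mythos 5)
Your implications $(ii)\Rightarrow(i)$ and $(iii)\Rightarrow(ii)$ are essentially the paper's: the first is the same contradiction argument through Corollary~\ref{yu-lemma-5-17-2} (you only omit the small rescaling $\varepsilon u$ needed to place the control in the unit ball $B_1^m(0)$ before invoking $(\textbf{A})_{y_0}$, which is harmless since $\mathfrak L$ is a subspace), and the second is the classical duality that the paper explicitly omits. The problem is the core implication $(i)\Rightarrow(iii)$, where your plan has a genuine gap.

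The step that would fail is the upgrade from an $L^2(\omega)$ to an $L^1(\omega)$ observation by ``interior parabolic ultracontractivity.'' Writing $\mathcal B^*e^{s\mathcal A^*}z=\chi_\omega B^\top e^{sA^\top}e^{s\mathbb I_n\triangle}z$, the quantity you observe at time $s'$ is the restriction to $\omega$ of $e^{s'\mathbb I_n\triangle}z$ (composed with $B^\top e^{s'A^\top}$), while the quantity you want to bound at time $s>s'$ depends on $e^{s'\mathbb I_n\triangle}z$ on \emph{all} of $\Omega$, because the heat semigroup is nonlocal. There is no local smoothing estimate of the form $\|\chi_\omega e^{(s-s')\triangle}w\|_{L^2(\omega)}\leq C\|\chi_\omega w\|_{L^1(\omega)}$; making such a step rigorous is precisely a quantitative unique continuation statement, which is the very thing you were hoping to avoid. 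A second, independent difficulty is your starting point: for systems coupled by a constant matrix with $\mathrm{rank}\,B<n$, an $L^2$ observability inequality is not obtained by Carleman estimates ``with the rank condition propagated through $A^\top$''; the known results (Lissy--Zuazua, Qin--Wang) go through spectral methods exactly because Carleman estimates do not handle rank-deficient $B$ directly. The paper's route (Lemma~\ref{lem-new-LR-interpolation}) sidesteps both issues by working at the spectral level from the outset: for a low-frequency packet $z=\sum_{\lambda_j\leq\lambda}z_je_j$ it combines the finite-dimensional observability $\|e^{tA^\top}v\|\leq Ct^{-p}\int_0^t\|B^\top e^{sA^\top}v\|\,ds$ (Gyurkovics/Seidman--Yong, which is where the Kalman condition enters) with the Lebeau--Robbiano/AEWZ spectral inequality $\sum a_j^2\leq Ce^{C\sqrt\lambda}\|\sum a_je_j\|^2_{L^1(\omega)}$, which converts $L^2(\Omega)$ directly into $L^1(\omega)$ at cost $e^{C\sqrt\lambda}$; the high frequencies are absorbed by the dissipation $e^{-\lambda T}$, an optimization in $\lambda$ and $\varepsilon$ yields the interpolation inequality \eqref{new-LR-interpolation-ineq}, and the full estimate $(iii)$ then follows from the telescoping sum over the dyadic intervals $[T/2^{j+1},T/2^j]$. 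Without some substitute for the spectral inequality, your $(i)\Rightarrow(iii)$ does not close.
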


\begin{remark} The following two remarks are worth mentioning.
\begin{enumerate}
  \item [(i)]  The $L^2$-controllability of  coupled parabolic equations was obtained in \cite{Lissy-Zuazua}.
    Proposition \ref{yu-lemma-5-18-2} improve the corresponding result
  in \cite{Lissy-Zuazua}. We also mention the work
    \cite{Miller} for the $L^2$-controllability of an abstract heat-like equation.
  \item [(ii)] In this paper, we only need the null controllability over each $(0,T)$ for the system (\ref{yu-5-16-1})
  with  $L^\infty(0,T;L^2(\Omega;\mathbb{R}^m))$ controls. This is weaker than the controllability in Proposition
      \ref{yu-lemma-5-18-2}.
    However, the later   may have independent significance. This is the reason we present it here.

\end{enumerate}

\end{remark}

The following lemma is used to prove Proposition \ref{yu-lemma-5-18-2}.

\begin{lemma}\label{lem-new-LR-interpolation}
Suppose that $\mbox{rank}\;(B,AB,\cdots,A^{n-1}B)=n$. Let
\begin{eqnarray*}
 p := \min \bigg\{ j\in \mathbb N^+ ~:~
 \mbox{rank}\,(B,AB,\cdots,A^{j-1}B) =n
 \bigg\}.
\end{eqnarray*}
Then there is $C=C(\Omega,\omega,A,B)>0$ so that when $\theta\in(0,1)$ and $T\geq S> 0$,
  \begin{eqnarray}\label{new-LR-interpolation-ineq}
    \| e^{T\mathcal A^*} z\|_{L^2(\Omega;\mathbb R^n)}
    \leq   C e^{ \frac{C}{\theta T} }  \frac{1}{ S^{p-1} }
     \Bigg( \frac{1}{S} \int_{T-S}^T \|\mathcal B^* e^{t \mathcal A^* } z \|_{ L^1(\Omega;\mathbb R^m) } dt \Bigg)^{1-\theta}
     \Big(\|z\|_{L^2(\Omega;\mathbb R^n)} \Big)^{\theta}\;\mbox{for all}\;z\in L^2(\Omega;\mathbb R^n).
  \end{eqnarray}

\end{lemma}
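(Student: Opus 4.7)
The plan is to combine a finite-dimensional Kalman-type observability inequality for $(A^\top,B^\top)$ with the scalar Lebeau--Robbiano spectral inequality on $(\Omega,\omega)$, and then run a standard truncation/optimization argument. The crucial algebraic fact is the commutativity $[\mathbb I_n\triangle,A^\top]=0$, which gives the factorization $e^{t\mathcal A^*}=e^{tA^\top}e^{t\mathbb I_n\triangle}$ and thus lets me separate the spatial and matricial dynamics.

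First I would prove the pure ODE estimate: for every $v\in\mathbb R^n$ and $S>0$,
\begin{eqnarray*}
|v|_{\mathbb R^n}\leq \frac{C}{S^{p-1}}\cdot\frac{1}{S}\int_0^S |B^\top e^{sA^\top}v|_{\mathbb R^m}\,ds,
\end{eqnarray*}
with $C=C(A,B)$. This is a classical consequence of the minimality of $p$, which makes $v\mapsto (B^\top v,\ldots,B^\top(A^\top)^{p-1}v)$ injective on $\mathbb R^n$, combined with the Taylor expansion of $s\mapsto B^\top e^{sA^\top}v$ and a moment-inversion argument producing the sharp factor $S^{-(p-1)}$. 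Next, I would combine this with the scalar Lebeau--Robbiano spectral inequality $\|f\|_{L^2(\Omega)}\leq Ce^{C\sqrt{\mu}}\|f\|_{L^1(\omega)}$, valid for $f$ with $-\triangle$-spectral support in $\{\lambda_j\leq\mu\}$, applied componentwise to the $\mathbb R^n$-valued observation. Writing $P_\mu$ for the spectral projector of $-\triangle$ onto $\{\lambda_j\leq\mu\}$, I would apply the ODE estimate pointwise in $x\in\omega$ to the vector $(e^{tA^\top}e^{T\mathbb I_n\triangle}z)(x)$ and reconcile the gap between this frozen-in-time state $e^{T\mathbb I_n\triangle}z$ and the truly observed state $e^{t\mathbb I_n\triangle}z$ through the backward-heat bound $\|e^{(t-T)\mathbb I_n\triangle}\|_{\mathcal L(P_\mu L^2)}\leq e^{\mu(T-t)}\leq e^{\mu S}$, which is the only non-trivial cost of running $e^{t\Delta}$ backwards on the low-frequency subspace. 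This combination should produce the low-frequency estimate
\begin{eqnarray*}
\|e^{T\mathcal A^*}z\|_{L^2(\Omega;\mathbb R^n)}\leq Ce^{C(\sqrt{\mu}+\mu S+T)}\cdot\frac{1}{S^{p-1}}\cdot\frac{1}{S}\int_{T-S}^T \|\mathcal B^* e^{t\mathcal A^*}z\|_{L^1(\Omega;\mathbb R^m)}\,dt,\qquad z=P_\mu z.
\end{eqnarray*}

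Finally, for arbitrary $z$ I would split $z=P_\mu z+(I-P_\mu)z$, bound the high-frequency tail via the dissipation $\|e^{T\mathcal A^*}(I-P_\mu)\|_{\mathcal L(L^2)}\leq e^{T\|A\|-\mu T}$, and absorb the resulting $\|z\|_{L^2}$ contribution into the left-hand side by a Young-type interpolation step that introduces the free parameter $\theta$. Choosing $\mu\sim (\theta T)^{-2}$ so that $e^{C\sqrt{\mu}}\cdot e^{-\mu\theta T}\lesssim e^{C/(\theta T)}$ then produces the desired inequality (\ref{new-LR-interpolation-ineq}). The main difficulty I anticipate is the second step: applying the finite-dimensional Kalman estimate pointwise in $x$ to a \emph{time-evolving} vector while keeping the polynomial factor $S^{-(p-1)}$ sharp and the spectral cost no worse than $e^{C\sqrt{\mu}}$, which is why the reduction to the low-frequency subspace $P_\mu L^2$ (where backward evolution under $e^{t\triangle}$ is only exponentially expensive in $\mu$) is essential. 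The concluding Lebeau--Robbiano-type optimization is, by contrast, largely bookkeeping.
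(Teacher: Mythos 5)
Your overall architecture is the same as the paper's: a finite-dimensional Kalman observability estimate for $(A^\top,B^\top)$ with the sharp factor $S^{-p}$ (Gyurkovics / Seidman--Yong), the scalar Lebeau--Robbiano spectral inequality on $(\Omega,\omega)$ in $L^1(\omega)$, a low/high frequency splitting, and a final optimization over the spectral cutoff that produces the $(1-\theta,\theta)$ product. However, the step you yourself flag as the main difficulty is where your argument goes wrong. You propose to apply the ODE estimate pointwise in $x\in\omega$ to the frozen state $e^{T\mathbb I_n\triangle}z$ and then ``reconcile'' the frozen observation $B^\top e^{tA^\top}e^{T\mathbb I_n\triangle}z$ with the true observation $B^\top e^{t\mathcal A^*}z$ via the backward-heat bound $\|e^{(t-T)\mathbb I_n\triangle}\|_{\mathcal L(P_\mu L^2)}\leq e^{\mu(T-t)}$. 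This is the wrong direction: the backward bound controls the \emph{true} state by the \emph{frozen} one, whereas what you need is to dominate the frozen observation by the true one, which follows for free from the identity $B^\top e^{tA^\top}e^{T\mathbb I_n\triangle}z=e^{(T-t)\mathbb I_n\triangle}\bigl(B^\top e^{t\mathcal A^*}z\bigr)$ and the fact that $e^{(T-t)\mathbb I_n\triangle}$ is a \emph{contraction} on $L^2(\Omega;\mathbb R^m)$ --- at no spectral cost. Moreover, this comparison is nonlocal, so it cannot be carried out ``pointwise in $x\in\omega$''; it must be done in $L^2(\Omega)$ first, and only afterwards does one localize to $\omega$ via the spectral inequality applied to the (spectrally truncated) function $B^\top e^{t\mathcal A^*}z$.

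This is not merely an aesthetic point. If you really pay the factor $e^{\mu S}$, your low-frequency estimate carries $e^{C\sqrt\mu+\mu S}$ in front of the observation term while the high-frequency tail only decays like $e^{-\mu T}$; optimizing over $\mu$ then yields at best an exponent pair $\bigl(T/(S+T),\,S/(S+T)\bigr)$, so for $S$ comparable to $T$ you cannot reach any $\theta<1/2$, and the absorption $C\sqrt\mu+\mu S\leq\tfrac14\frac{\theta}{1-\theta}\mu T+C/(\theta T)$ needed for the stated constant $Ce^{C/(\theta T)}$ fails. The paper sidesteps the issue entirely by working eigenmode by eigenmode: on the span of $e_j$ the dual flow is exactly $e^{-\lambda_j s}e^{sA^\top}z_j$, the Kalman estimate applies to $e^{tA^\top}z_j$ alone, and the scalar factor is handled by the monotonicity $e^{-\lambda_j t}\leq e^{-\lambda_j s}$ for $s\leq t$. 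Also note that a single fixed choice $\mu\sim(\theta T)^{-2}$ does not by itself yield the product form $(\cdot)^{1-\theta}(\cdot)^{\theta}$; as in the paper one must choose the free parameter according to the ratio of the observation term to $\|z\|_{L^2}$, treating separately the degenerate case where the observation integral vanishes (which is excluded by unique continuation) and the case where it dominates $\|z\|_{L^2}$. With the contraction replacing the backward bound and the optimization made data-dependent, your proof closes and coincides in substance with the paper's.
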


\begin{proof}

Write $\{\lambda_j\}_{j\geq 1}$, with $\lambda_1<\lambda_2\leq\cdots$, for all eigenvalues
of  the operator $(-\triangle, H_0^1(\Omega)\cap H^2(\Omega))$. Let $ e_j$ (with $j=1,2,\dots$) be the  corresponding normalized eigenfunction.
We organize the proof by two steps.

\vskip 5pt

\noindent {\it Step 1. We show that  there is $C=C(\Omega,\omega,A,B)>0$ so that  when $t\in (0,1)$ and $\lambda>0$,
\begin{eqnarray}\label{coupled-LR-ineq}
 \| e^{t\mathcal A^*} z\|_{L^2(\Omega;\mathbb R^n)}
    \leq   C e^{ C \sqrt{\lambda} }  \frac{1}{ t^{p} }
      \int_0^t \|\mathcal B^* e^{s \mathcal A^* } z \|_{ L^1(\Omega;\mathbb R^m) } ds
\end{eqnarray}
for all $z=\sum_{\lambda_j\leq \lambda} z_j e_j$, with $\{ z_j \}_{j\in\{i\in\mathbb{N}^+:\lambda_i\leq \lambda\}} \subset \mathbb R^n$. }

For this purpose, we arbitrarily fix  $t\in (0,1)$, $\lambda>0$ and $z=\sum_{\lambda_j\leq \lambda} z_j e_j$ with $\{ z_j \}_{j\in\{i\in\mathbb{N}^+:\lambda_i\leq \lambda\}} \subset \mathbb R^n$.
 First, one can easily see that  for each $s\in(0,t)$,
\begin{eqnarray}\label{coupled-LR-ineq-1}
   e^{s\mathcal A^*} z= \sum_{\lambda_j\leq \lambda}
  (e^{-\lambda_j s}  e^{s A^\top} z_j)  e_j
  \;\;\mbox{and}\;\;
  B^\top e^{s\mathcal A^*} z= \sum_{\lambda_j\leq \lambda}
  ( e^{-\lambda_j s}  B^\top e^{s A^\top} z_j ) e_j.
\end{eqnarray}
Second, since $\mbox{rank}\;(B,AB,\cdots,A^{n-1}B)=n$, we can apply  \cite[Lemma 2]{Gyurkovics} (or \cite[Theorem 1]{Seidman-Yong}) to find  $C=C(A,B)>0$ (independent of $t\in(0,1)$) so that
\begin{eqnarray*}
 \| e^{t A^\top} v \|_{\mathbb R^n}
 \leq  \frac{C}{t^p} \int_0^t \| B^\top e^{s A^\top} v \|_{\mathbb R^m} ds,\;\mbox{when}\; v\in \mathbb R^n.
\end{eqnarray*}
From this and the first equation (\ref{coupled-LR-ineq-1}), it follows that
\begin{eqnarray}\label{coupled-LR-ineq-2}
 & &\| e^{t\mathcal A^*} z  \|_{L^2(\Omega;\mathbb R^n)}
 =
 \sum_{\lambda_j\leq\lambda}
 \| e^{-\lambda_j t}  e^{t A^\top} z_j \|_{\mathbb R^n}
 \leq   \sum_{\lambda_j\leq\lambda} e^{-\lambda_j t}   \Big( \frac{C}{ t^p}
 \int_0^t \| B^\top e^{s  A^\top} z_j  \|_{\mathbb R^m} ds
 \Big)
 \nonumber\\
 &\leq&  \frac{C}{ t^p} \int_0^t
  \sum_{\lambda_j\leq\lambda} \Bigg(
      \|  B^\top e^{-\lambda_j s} e^{s  A^\top} z_j  \|_{\mathbb R^m}
   \Bigg) ds
   =  \frac{C}{ t^p}   \int_0^t \| B^\top e^{s \mathcal A^*} z\|_{ L^2(\Omega;\mathbb R^m) } ds.
\end{eqnarray}

Meanwhile,  accroding to \cite[Theorems 5,3,8]{AEWZ} (see \cite{Lebeau-Robbiano} for the original study), there is $C=C(\Omega,\omega)>0$ so that for each sequence $\{a_j\}_{\lambda_j\leq\lambda} \subset \mathbb R$,
\begin{eqnarray*}
 \sum_{\lambda_j \leq \lambda} a_j^2  \leq  Ce^{C \sqrt{\lambda} }
 \bigg\| \sum_{\lambda_j \leq \lambda} a_j e_j \bigg\|_{ L^1(\omega) }^2.
\end{eqnarray*}
Set $v_j(s):= e^{-\lambda_j s}B^\top e^{s A^\top} z_j$ when $\lambda_j \leq \lambda$. The above (adapted to the vector valued case), along with the second equation in (\ref{coupled-LR-ineq-1}), yields that for each $s\in (0,t)$,
\begin{eqnarray*}
 \| B^\top  e^{s\mathcal A^*} z  \|_{L^2(\Omega;\mathbb R^m)}^2  &=& \bigg\| \sum_{\lambda_j\leq \lambda} v_j(s) e_j
 \bigg\|_{L^2(\Omega;\mathbb R^m)}^2=\sum_{\lambda_j\leq \lambda} \|v_j(s)\|_{\mathbb{R}^m}^2
 \nonumber\\
 &\leq&  Ce^{C \sqrt{\lambda} }
 \bigg\| \sum_{\lambda_j \leq \lambda} v_j(s) e_j \bigg\|_{ L^1(\omega;\mathbb R^m) }^2
 = Ce^{C \sqrt{\lambda} } \| \mathcal B^* e^{s\mathcal A^*} z  \|_{L^1(\omega;\mathbb R^m)}^2,
\end{eqnarray*}
which, together with (\ref{coupled-LR-ineq-2}), leads to (\ref{coupled-LR-ineq}).
\vskip 5pt

\noindent {\it Step 2. We use (\ref{coupled-LR-ineq}) to prove (\ref{new-LR-interpolation-ineq}).}

We only need to show (\ref{new-LR-interpolation-ineq}) for $z\neq 0$.
Arbitrarily fix $\lambda>0$, $\theta\in (0,1)$,  $0<S\leq T$  and $z\in L^2(\Omega;\mathbb R^n)\setminus\{0\}$.   Write
\begin{eqnarray*}
 z= \sum_{j\geq 1} z_j e_j
 = \sum_{\lambda_j\leq \lambda}  z_j e_j +
 \sum_{\lambda_j> \lambda}  z_j e_j
 := z_{\leq \lambda} + z_{>\lambda},
\end{eqnarray*}
   where $\{z_j\}_{j\in\mathbb{N}^+}\subset\mathbb{R}^n$. Set $S_1:=\min\{S,T/2\}$. It is clear that $S/2\leq S_1 \leq S$. By (\ref{coupled-LR-ineq}), where  $(t,z)$ is replaced by $({S_1, e^{(T-S_1)\mathcal{A}^*}}z)$,
   after some simple computations, we can find  $C_1$ and $C_2$
(only depending on $\Omega$, $\omega$, $A$ and $B)$) so that
\begin{eqnarray}\label{coupled-LR-ineq-3}
& &\| e^{T\mathcal A^*} z  \|_{L^2(\Omega;\mathbb R^n)}
\leq \| e^{T\mathcal A^*} z_{\leq\lambda }  \|_{L^2(\Omega;\mathbb R^n)}
+ \| e^{T\mathcal A^*} z_{>\lambda }  \|_{L^2(\Omega;\mathbb R^n)}
\nonumber\\
&\leq&  Ce^{C \sqrt{\lambda} }  \frac{1}{S_1^{p-1}}\frac{1}{S_1} \int_{T-S_1}^{T} \| \mathcal B^* e^{s\mathcal A^*} z_{\leq\lambda }  \|_{L^1(\Omega;\mathbb R^m)} ds
+ C_1e^{-\lambda T} \| z_{>\lambda} \|_{L^2(\Omega;\mathbb R^n)}
\nonumber\\
&\leq& \frac{C_2}{S^{p-1}} \Bigg[  e^{C \sqrt{\lambda} }  \left(\frac{1}{S} \int_{T-S}^{T}
\| \mathcal B^* e^{s\mathcal A^*} z  \|_{L^1(\Omega;\mathbb R^m)} ds\right)
+  e^{C\sqrt{\lambda}-\lambda T/2}  \|z\|_{L^2(\Omega;\mathbb R^n)}
 \Bigg].
\end{eqnarray}
Set $\beta := \frac{\theta}{1-\theta}$. Observe that when $\lambda>0$,
  \begin{eqnarray*}
  C \sqrt{\lambda} \leq  \frac{1}{4} \beta\lambda T  + \frac{C^2}{\beta T}
  \;\;\mbox{and}\;\;
  C \sqrt{\lambda} - \frac{1}{4} \lambda T  \leq \frac{C^2}{T}.
  \end{eqnarray*}
 These, along with (\ref{coupled-LR-ineq-3}), yield that
 \begin{eqnarray*}
\| e^{T\mathcal A^*} z  \|_{L^2(\Omega;\mathbb R^n)}
\leq \frac{C_2}{S^{p-1}} e^{ \frac{C^2}{\theta T} } \Bigg[  e^{\frac{1}{4} \beta\lambda T  }  \left(\frac{1}{S} \int_{T-S}^{T}
\| \mathcal B^* e^{s\mathcal A^*} z  \|_{L^1(\Omega;\mathbb R^m)} ds\right)
+  e^{- \frac{1}{4} \lambda T}  \|z\|_{L^2(\Omega;\mathbb R^n)}
 \Bigg].
\end{eqnarray*}
Since the above holds for all $\lambda>0$, we can see  that for each $\varepsilon\in(0,1)$,
 \begin{eqnarray}\label{yu-6-23-1}
\| e^{T\mathcal A^*} z  \|_{L^2(\Omega;\mathbb R^n)}
\leq \frac{C_2}{S^{p-1}} e^{ \frac{C^2}{\theta T} } \left[  \frac{1}{\varepsilon^\beta}  \left(\frac{1}{S} \int_{T-S}^{T}
\| \mathcal B^* e^{s\mathcal A^*} z  \|_{L^1(\Omega;\mathbb R^m)} ds\right)
+  \varepsilon \|z\|_{L^2(\Omega;\mathbb R^n)}
 \right].
\end{eqnarray}
   Meanwhile, we have
   \begin{eqnarray}\label{3.7wang}
   \int_{T-S}^{T}\|\mathcal B^* e^{s\mathcal A^*} z  \|_{L^1(\Omega;\mathbb R^m)} ds\neq 0.
   \end{eqnarray}
   Indeed, if (\ref{3.7wang}) were not true, then by  (\ref{yu-6-23-1}), we would have
$$
    \| e^{T\mathcal A^*} z  \|_{L^2(\Omega;\mathbb R^n)}
\leq \varepsilon\frac{C_2}{S^{p-1}} e^{ \frac{C^2}{\theta T} } \|z\|_{L^2(\Omega;\mathbb{R}^n)}\;\;\mbox{for all}\;\;\varepsilon\in(0,1).
$$
    Letting $\varepsilon\to 0^+$ in the above leads to  $\| e^{T\mathcal A^*} z  \|_{L^2(\Omega;\mathbb R^n)}=0$. This, together with \cite[$(ii)$ of Proposition 3.2]{Qin-Wang}, implies that $z=0$ which leads to a contradiction. So \eqref{3.7wang} is true.
\par
    In the case that $\|z\|_{L^2(\Omega;\mathbb R^n)}\leq \frac{1}{S} \int_{T-S}^{T}
\| \mathcal B^* e^{s\mathcal A^*} z  \|_{L^1(\Omega;\mathbb R^m)} ds$, one can directly check that
\begin{eqnarray*}
    \|e^{T\mathcal{A}^*}z\|_{L^2(\Omega;\mathbb{R}^n)} &\leq& C_3\|z\|_{L^2(\Omega;\mathbb{R}^n)}
    \leq C_3\left(\frac{1}{S} \int_{T-S}^{T}
\| \mathcal B^* e^{s\mathcal A^*} z  \|_{L^1(\Omega;\mathbb R^m)} ds\right)^{1-\theta}(\|z\|_{L^2(\Omega;\mathbb{R}^n)})^{\theta}\nonumber\\
 &\leq&\frac{C_4}{S^{p-1}}e^{\frac{C^2}{\theta T}}\left(\frac{1}{S} \int_{T-S}^{T}
\| \mathcal B^* e^{s\mathcal A^*} z  \|_{L^1(\Omega;\mathbb R^m)} ds\right)^{1-\theta}(\|z\|_{L^2(\Omega;\mathbb{R}^n)})^{\theta},
\end{eqnarray*}
   form some constants  $C_3$ and $C_4$ depending only on $A$. (Here we used  $T<1$.) Thus, (\ref{new-LR-interpolation-ineq}) is true in this case.
\par
    In the case that $\|z\|_{L^2(\Omega;\mathbb R^n)}> \frac{1}{S} \int_{T-S}^{T}
\| \mathcal B^* e^{s\mathcal A^*} z  \|_{L^1(\Omega;\mathbb R^m)} ds$, we let
\begin{equation*}\label{yu-6-23-2}
    \varepsilon_0:=\left(\frac{1}{S}\int_{T-S}^T\|\mathcal{B}^*e^{s\mathcal{A}^*}z\|_{L^1(\Omega;\mathbb{R}^m)}ds/
    \|z\|_{L^2(\Omega;\mathbb{R}^n)}\right)^{\frac{1}{1+\beta}}.
\end{equation*}
    By (\ref{3.7wang}), we have $\varepsilon_0\in (0,1)$.
    Thus we can use (\ref{yu-6-23-1}) where $\varepsilon=\varepsilon_0$ to find
\begin{eqnarray*}
     \|e^{T\mathcal{A}^*}z\|_{L^2(\Omega;\mathbb{R}^n)}
     \leq 2\frac{C_2}{S^{p-1}}e^{\frac{C^2}{\theta T}}\left(\frac{1}{S} \int_{T-S}^{T}
\| \mathcal B^* e^{s\mathcal A^*} z  \|_{L^1(\Omega;\mathbb R^m)} ds\right)^{\frac{1}{1+\beta}}(\|z\|_{L^2(\Omega;\mathbb{R}^n)})^{\frac{\beta}{1+\beta}}.
\end{eqnarray*}
    This, along with the fact $\beta=\frac{\theta}{1-\theta}$, leads to (\ref{new-LR-interpolation-ineq}) for this case.
\par
    In summary, we complete the proof of Lemma \ref{lem-new-LR-interpolation}.
\end{proof}

We now on the position to prove Proposition \ref{yu-lemma-5-18-2}.

\begin{proof}[Proof of Proposition \ref{yu-lemma-5-18-2}]
We divide the proof by several steps.
\vskip 5pt
\noindent{\it Step 1. We show that $(i)\Rightarrow(iii)$.}

     Suppose that $(i)$ is true. It suffices to prove $(iii)$ for the case that $0<T<1$.
     To this end, we arbitrarily fix $T\in (0,1)$ and $z\in L^2(\Omega;\mathbb R^n)$.
     By $(i)$, we can apply
  Lemma \ref{lem-new-LR-interpolation}, where
 $(T,S,\theta,z)$ is replaced by
$(T/2^{j+1},T/2^{j+1},1/3,e^{T\mathcal A^*/2^{j+1}}z)$, with $j$ a nonnegative integer, to find
  $C=C(\Omega,\omega,A,B)>0$ so that for each $\varepsilon>0$,
 \begin{eqnarray*}
 \| e^{T\mathcal A^*/2^j} z  \|_{L^2(\Omega;\mathbb R^n)}&\leq&
 C e^{ \frac{C2^j}{T} }\left(\int_0^{T/2^{j+1}}\|\mathcal{B}^*e^{(s+\frac{T}{2^{j+1}})\mathcal{A}^*}z\|_{L^1(\Omega;\mathbb{R}^m)}
 ds\right)^{2/3}\left(\| e^{T\mathcal A^*/2^{j+1} } z\|_{L^2(\Omega;\mathbb R^n)} \right)^{1/3}\nonumber\\
&\leq&    C e^{ \frac{C2^j}{T} }
     \left( \int_{T/2^{j+1}}^{T/2^j}
\| \mathcal B^* e^{s\mathcal A^*} z  \|_{L^1(\Omega;\mathbb R^m)} ds \right)^{2/3}
     \left(\| e^{T\mathcal A^*/2^{j+1} } z\|_{L^2(\Omega;\mathbb R^n)} \right)^{1/3}
    \nonumber\\
&\leq& C^{\frac{3}{2} } e^{ \frac{3C2^j}{2T} }  \frac{2}{3\sqrt{3\varepsilon}}  \int_{T/2^{j+1}}^{T/2^j}
\| \mathcal B^* e^{s\mathcal A^*} z  \|_{L^1(\Omega;\mathbb R^m)} ds
+  \varepsilon \|e^{T\mathcal A^*/2^{j+1} } z\|_{L^2(\Omega;\mathbb R^n)}.
\end{eqnarray*}
(In the above, we used the Young inequality.)
 Choosing $\varepsilon=e^{-\frac{3C2^j}{T} }$ in the above leads to  that for all $j\geq 0$,
\begin{eqnarray*}
 e^{-\frac{3C2^j}{T} } \| e^{T\mathcal A^*/2^j} z  \|_{L^2(\Omega;\mathbb R^n)}
 -  e^{-\frac{3C2^{j+1}}{T} }
 \| e^{T\mathcal A^*/2^{j+1} } z  \|_{L^2(\Omega;\mathbb R^n)}
 \leq  \frac{2}{3\sqrt{3}}C^{\frac{3}{2} }
 \int_{T/2^{j+1}}^{T/2^j}
\| \mathcal B^* e^{s\mathcal A^*} z  \|_{L^1(\Omega;\mathbb R^m)} ds.
\end{eqnarray*}
Summing the above from $j=0$ to $+\infty$  yields
\begin{eqnarray*}
  \| e^{T\mathcal A^*} z  \|_{L^2(\Omega;\mathbb R^n)}
 \leq  \frac{2}{3\sqrt{3}}C^{\frac{3}{2} } e^{\frac{3C}{T} }
 \int_{0}^{T}
\| \mathcal B^* e^{s\mathcal A^*} z  \|_{L^1(\Omega;\mathbb R^m)} ds,
\end{eqnarray*}
which leads to  $(iii)$.
\vskip 5pt
\noindent{\it Step 2. We show that $(iii)\Rightarrow(ii)$.}

This follows by  the classical duality method. We omit the details here.
\vskip 5pt
\noindent{\it Step 3. We show that $(ii)\Rightarrow(i)$.}

 By contradiction, we suppose that $(ii)$ holds but
\begin{eqnarray*}
 \mbox{rank}\,(B,AB,\ldots,A^{n-1}B)<n.
\end{eqnarray*}
Then there is
$\vartheta\in \mathbb R^n\setminus\{0\}$ so that
\begin{eqnarray}\label{0620-special-initial-data-1}
  \langle \vartheta, v \rangle_{\mathbb R^n}=0\;\;\mbox{for all}\;\; v\in \mathfrak R.
\end{eqnarray}
(Here, $\mathfrak R$ is given by (\ref{yu-5-16-5}).) Set
\begin{eqnarray}\label{0620-special-initial-data}
 y_0(x):=\vartheta~~\mbox{for a.e.}\;\;x\in\Omega.
\end{eqnarray}
According to $(ii)$, there is a control $u\in L^\infty(\Omega\times\mathbb R^+;\mathbb R^m)$ so that
$y(T;y_0,u)=0$.
Let $\varepsilon>0$ so that
$\varepsilon u \in L^\infty( \mathbb R^+;B_1^m(0))$.
Then $\varepsilon u$ is an admissible control to the
problem $(\mathcal{TP})_{\varepsilon y_0}$. From this and $(ii)$ of
 Corollary \ref{yu-lemma-5-17-2}, we see that
 $y_0\in \mathfrak{L}\setminus\{0\}$
 (where $\mathfrak{L}$
 is given by \eqref{yu-5-16-5}).
 This contradicts (\ref{0620-special-initial-data}) and (\ref{0620-special-initial-data-1}).
 Therefore, $(i)$ is true.

 Hence, we finish  the proof of Proposition \ref{yu-lemma-5-18-2}.
\end{proof}

\subsection{Unique continuation of the dual system}

This subsection presents a continuation property for the dual system of \eqref{yu-5-16-1}.
\begin{proposition}\label{thm-sampling-unique}
Let $I\subset \mathbb R^+$ be an open interval  with $|I|\leq d_A$,
where $d_A$  is given by (\ref{yu-5-23-12}).  Suppose that there is $z\in L^2(\Omega;\mathbb R^n)$ and  $\{t_j\}_{j=1}^{q_{A,B}} \subset I$
(where  $q_{A,B}$ is given by  (\ref{yu-5-23-13}))
so that
\begin{eqnarray}\label{samping-points-q}
  \mathcal B^* e^{t_j \mathcal A^*} z = 0
  \;\mbox{for each}\;\;j=1,\ldots,q_{A,B}.
\end{eqnarray}
Then
\begin{eqnarray}\label{yu-6-23-3}
 z(x) \in \mbox{Ker\,} \left(
                         \begin{array}{c}
                            B^\top\\
                           B^\top A^\top \\
                           \vdots \\
                           B^\top (A^\top)^{n-1} \\
                         \end{array}
                       \right)
 \;\;\mbox{for a.e.}\;\;
 x\in\Omega.
\end{eqnarray}
 Especially, $\mathcal B^*e^{t\mathcal A^*}z = 0$ for each $t\geq0$.

\end{proposition}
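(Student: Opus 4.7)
The plan is to use spatial real-analyticity to promote the hypothesis from $\omega$ to all of $\Omega$, reduce to a mode-by-mode statement in $\mathbb{R}^n$ via the spectral expansion of $-\triangle$, and then invoke a purely finite-dimensional unique continuation from \cite{Qin-Wang}.

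First, I would observe that for each fixed $t>0$ the function $w(t,\cdot):=e^{t\mathcal A^*}z$ solves the constant-coefficient parabolic system $\partial_t w = \mathbb{I}_n\triangle w + A^\top w$ on $\Omega$ with Dirichlet boundary conditions and initial datum $z$, so $w(t,\cdot)$ is real-analytic in the interior of $\Omega$. Left-multiplying by the constant matrix $B^\top$ preserves this real-analyticity, and hence the hypothesis \eqref{samping-points-q}, which only says vanishing on the open set $\omega$, upgrades by analytic continuation to $B^\top e^{t_j \mathcal A^*}z(x)=0$ for a.e.\ $x\in\Omega$ and every $j=1,\ldots,q_{A,B}$.

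Next, I would expand $z=\sum_{k\geq 1}z_k e_k$ with $z_k\in\mathbb R^n$ in the orthonormal basis of $L^2(\Omega)$ provided by $(-\triangle, H_0^1(\Omega)\cap H^2(\Omega))$. Since $\mathbb{I}_n\triangle$ and $A^\top$ commute, one has $e^{t\mathcal A^*}z = \sum_{k\geq 1} e^{-\lambda_k t}\,e^{tA^\top}z_k\cdot e_k$, and the vanishing from the previous step reads
\begin{equation*}
\sum_{k\geq 1} e^{-\lambda_k t_j}\, B^\top e^{t_j A^\top}z_k\cdot e_k \;=\; 0 \quad\text{in } L^2(\Omega;\mathbb R^m).
\end{equation*}
Reading off Fourier coefficients against $e_k$ and dividing by $e^{-\lambda_k t_j}>0$, this forces $B^\top e^{t_j A^\top} z_k = 0$ for every $k\geq 1$ and every $j=1,\ldots,q_{A,B}$.

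At this point the problem is purely finite-dimensional: for each $k$, the vector $z_k\in\mathbb R^n$ is annihilated by $B^\top e^{tA^\top}$ at $q_{A,B}$ sample instants lying in an interval of length $\leq d_A$. This is exactly the ODE unique continuation proved in \cite{Qin-Wang}, where the definitions \eqref{yu-5-23-12}--\eqref{yu-5-23-13} of $d_A$ and $q_{A,B}$ are tailored so that the corresponding generalized-Vandermonde-type matrix built from $\{e^{t_j A^\top}\}_{j=1}^{q_{A,B}}$ is injective on the cyclic-observable subspace. Applying it termwise yields $z_k\in\bigcap_{r=0}^{n-1}\mbox{Ker}\,(B^\top(A^\top)^r)$ for every $k$; summing the series preserves this inclusion pointwise a.e.\ on $\Omega$, which is \eqref{yu-6-23-3}. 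For the last assertion, Cayley-Hamilton writes $e^{tA^\top}$ as a polynomial in $(A^\top)^0,\ldots,(A^\top)^{n-1}$, so \eqref{yu-6-23-3} forces $B^\top e^{tA^\top}z(x)=0$ a.e.; commuting the scalar Dirichlet heat semigroup past the constant matrix $B^\top$ then gives $\mathcal B^* e^{t\mathcal A^*}z = \chi_\omega\, e^{t\triangle}(B^\top e^{tA^\top}z)=0$ for every $t\geq 0$.

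The substantive obstacle is the ODE unique continuation invoked above: one needs a precise quantitative balance between $d_A$, which bounds the oscillation rate of $e^{tA^\top}$ through the imaginary parts of $\sigma(A)$, and $q_{A,B}$, the cyclic index of $A$ against some column of $B$. Since \cite{Qin-Wang} supplies exactly this ingredient, the remainder of the argument is the routine analyticity-plus-Fourier reduction described above.
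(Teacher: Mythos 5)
Your argument is correct, but it takes a genuinely different route from the paper's. The paper first applies the Kalman controllability decomposition (Lemma \ref{lem-Kalman-controllability-decomposition} together with Proposition \ref{prop-relation-semigroups}) to rewrite $\mathcal B^*e^{t\mathcal A^*}z$ as $\chi_\omega B_1^\top e^{t(\mathbb I_k\triangle+A_1^\top)}z_1$ with $(A_1,B_1)$ satisfying the Kalman rank condition, and then concludes $z_1=0$ by combining the rank identity of Lemma \ref{yu-lemma-5-23-4} (applied to $(A_1,B_1)$, using $d_A\le d_{A_1}$ and $q_{A,B}\ge q_{A_1,B_1}$) with the PDE-level sampled unique continuation of Lemma \ref{yu-lemma-5-23-2}, i.e.\ \cite[Theorem 5.2]{Qin-Wang}, taken as a black box. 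You bypass both the decomposition and Lemma \ref{yu-lemma-5-23-2}: interior spatial real-analyticity of the heat semigroup propagates the vanishing from $\omega$ to all of $\Omega$, the Dirichlet eigenbasis then reduces the hypothesis to the finite-dimensional conditions $B^\top e^{t_jA^\top}z_k=0$ mode by mode, and the same rank identity (Lemma \ref{yu-lemma-5-23-4}) finishes, once one observes that the column span of $(e^{At_1}B,\dots,e^{At_{q_{A,B}}}B)$ is contained in $\mathfrak R$ and hence, by the rank equality, coincides with it, so that $z_k\perp\mathfrak R$ for every $k$. The paper's route outsources all unique-continuation content to \cite{Qin-Wang} at the cost of carrying the decomposition; your route is more self-contained and delivers the kernel description \eqref{yu-6-23-3} directly without passing through $z_1=0$, but it leans on the classical (and here unproven) interior analyticity of $e^{t\triangle}$, which is essentially the ingredient that \cite[Theorem 5.2]{Qin-Wang} encapsulates. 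Your derivation of the final assertion via Cayley--Hamilton and commuting the constant matrix $B^\top e^{tA^\top}$ past $e^{t\triangle}$ matches, in substance, the computation the paper performs through \eqref{samping-points-q-2}. One small point to make explicit in either approach: the $q_{A,B}$ sample points must be pairwise distinct (and ordered) for Lemma \ref{yu-lemma-5-23-4} to apply.
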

\begin{remark}\label{remark2.8wang}
Suppose that $(A,B)$ satisfies the kalman rank condition. Then the right hand side of \eqref{yu-6-23-3} is $\{0\}$.
Thus, by Proposition \ref{thm-sampling-unique}, we see that \eqref{samping-points-q} implies $z=0$ over $\Omega$.
This is a unique continuation property of the dual system of \eqref{yu-5-16-1}.
\end{remark}
Before proving Proposition \ref{thm-sampling-unique}, we  recall the following two lemmas:
\begin{lemma}\label{yu-lemma-5-23-2} (\cite[Theorem 5.2]{Qin-Wang})
    Let $\{t_j\}_{j=1}^p\subset(0,+\infty)$ with $p\in \mathbb{N}^+$.  Then the following two statements are equivalent:
\begin{enumerate}
  \item [(i)] It holds that $\mbox{rank}\;(e^{At_1}B,e^{At_2}B,\cdots,e^{At_p}B)=n$;
  \item [(ii)] If $z\in L^2(\Omega;\mathbb R^n)$  satisfies that
  \begin{eqnarray*}
    \mathcal B^* e^{t_j \mathcal A^*} z=0 \;\;\mbox{in}\;\;  L^2(\Omega;\mathbb R^m)\;\;\mbox{for all}\;\;j\in\{1,2,\ldots,p\},
  \end{eqnarray*}
  then $z=0$.
\end{enumerate}

\end{lemma}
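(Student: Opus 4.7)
The plan is to reduce the problem to the controllable case via the Kalman decomposition of Lemma \ref{lem-Kalman-controllability-decomposition} and then to invoke the $k$-dimensional analog of Lemma \ref{yu-lemma-5-23-2}. Let $P$, $A_1$, $A_2$, $A_3$, $B_1$ and $k=\dim\mathfrak R$ be as in that lemma, and write $P^{-1}z=(w_1,w_2)^{\top}$ with $w_1\in L^2(\Omega;\mathbb R^k)$, $w_2\in L^2(\Omega;\mathbb R^{n-k})$. Taking adjoints in Proposition \ref{prop-relation-semigroups} and using the identity $B^{\top}P=(B_1^{\top},0)$, a direct block computation yields
\begin{equation*}
\mathcal B^{*}e^{t\mathcal A^{*}}z=\chi_\omega B_1^{\top}e^{t(\mathbb I_k\triangle+A_1^{\top})}w_1,\qquad t\ge 0,
\end{equation*}
because the off-diagonal cross term of $P^{-1}e^{t\mathcal A^{*}}P$ acts only on $w_2$ and is killed by $(B_1^{\top},0)$. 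Consequently, the hypothesis (\ref{samping-points-q}) is equivalent to $\chi_\omega B_1^{\top}e^{t_j(\mathbb I_k\triangle+A_1^{\top})}w_1=0$ for $j=1,\ldots,q_{A,B}$.

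Next I would verify two structural identities. The block form of $P^{-1}A^jP=(P^{-1}AP)^j$, combined with Cayley-Hamilton applied to the $k\times k$ matrix $A_1$, gives $\mbox{rank}\,(b,Ab,\ldots,A^{n-1}b)=\mbox{rank}\,(b_1,A_1b_1,\ldots,A_1^{k-1}b_1)$ for every column $b$ of $B$ with $P^{-1}b=(b_1,0)^{\top}$, so $q_{A,B}=q_{A_1,B_1}$. Since $\sigma(A)=\sigma(A_1)\cup\sigma(A_3)$, also $d_{A_1}\ge d_A$, hence $|I|\le d_{A_1}$. Now $(A_1,B_1)$ is controllable on $\mathbb R^k$ by (\ref{yu-5-18-3}), so the $k$-dimensional version of Lemma \ref{yu-lemma-5-23-2} (whose proof in \cite{Qin-Wang} applies verbatim to any controllable pair) reduces the desired conclusion $w_1=0$ to the purely algebraic rank identity
\begin{equation*}
\mbox{rank}\,(e^{t_1A_1}B_1,\ldots,e^{t_{q_{A_1,B_1}}A_1}B_1)=k.
\end{equation*}
For a controllable pair sampled at $q_{A_1,B_1}$ times within an interval of length $\le d_{A_1}$, this rank identity is exactly the result established in \cite{Qin-Wang} — indeed, it is the algebraic content for which the quantities $d_A$ and $q_{A,B}$ were introduced.

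Once $w_1=0$, we have $P^{-1}z(x)\in\{0\}\times\mathbb R^{n-k}$ for a.e.\ $x$, and hence $z(x)\in P(\{0\}\times\mathbb R^{n-k})=\mathfrak R^{\perp}$ a.e. But $\mathfrak R^{\perp}$ coincides with the kernel on the right-hand side of (\ref{yu-6-23-3}), which gives the first conclusion. For the last assertion, Cayley-Hamilton writes $e^{tA^{\top}}$ as a polynomial of degree $\le n-1$ in $A^{\top}$, so the kernel condition yields $B^{\top}e^{tA^{\top}}z(x)=0$ a.e.\ for every $t\ge 0$; since the pointwise matrix action $v\mapsto B^{\top}e^{tA^{\top}}v$ commutes with the scalar heat semigroup $e^{t\triangle}$ (which acts only on the spatial variable), this produces $\mathcal B^{*}e^{t\mathcal A^{*}}z=\chi_\omega e^{t\triangle}(B^{\top}e^{tA^{\top}}z)=0$ for every $t\ge 0$.

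The hard part is the rank identity invoked in the second paragraph. The numerology is sharp: $q_{A,B}$ is forced from below by the cyclic structure of the Krylov subspace of a single column, while $|I|\le d_A=\pi/\max|\mbox{Im}\lambda|$ is exactly what prevents the periodicity of $e^{tA_1}$ on complex-eigenvalue components from producing spurious linear dependencies among the $q_{A_1,B_1}$ sampled vectors. I would import this fact from \cite{Qin-Wang} as a black box rather than re-prove it here; all the remaining work (Kalman reduction, the two invariance identities, and the Cayley-Hamilton wrap-up) is essentially bookkeeping.
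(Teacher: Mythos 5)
You have proved the wrong statement. What you were asked to establish is Lemma \ref{yu-lemma-5-23-2} itself: the \emph{equivalence} between the rank condition $\mbox{rank}\,(e^{At_1}B,\ldots,e^{At_p}B)=n$ and the unique continuation property for a given finite family of sample times $\{t_j\}_{j=1}^p$. Your write-up never addresses either implication of that equivalence. Instead, it starts from the hypothesis \eqref{samping-points-q} and derives the conclusion \eqref{yu-6-23-3} together with $\mathcal B^*e^{t\mathcal A^*}z=0$ for all $t\ge 0$ --- that is, you have reconstructed, quite faithfully, the paper's proof of Proposition \ref{thm-sampling-unique} (the Kalman reduction via Lemma \ref{lem-Kalman-controllability-decomposition}, the identities relating $q_{A,B},d_A$ to $q_{A_1,B_1},d_{A_1}$, the application of the rank identity of Lemma \ref{yu-lemma-5-23-4}, and the Cayley--Hamilton wrap-up). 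Worse, your argument explicitly invokes ``the $k$-dimensional version of Lemma \ref{yu-lemma-5-23-2}'' as a black box to pass from the rank identity to $w_1=0$; using the target statement (in its controllable-pair form, which is the only nontrivial case) as an ingredient makes the argument circular as a proof of that statement.

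What is actually missing is the analytic core of the lemma. For $(i)\Rightarrow(ii)$ one must explain why $n$ pointwise-in-time observations $\chi_\omega B^\top e^{t_j\mathcal A^*}z=0$, localized on the subdomain $\omega$, force $z=0$: after expanding $z=\sum_j z_je_j$ in the Dirichlet eigenbasis, the rank condition handles the finite-dimensional factor $e^{t_jA^\top}$, but one still needs a unique continuation/spectral argument to get from vanishing on $\omega$ to vanishing on $\Omega$ across all the coupled eigenmodes simultaneously; this is the substance of \cite[Theorem 5.2]{Qin-Wang} and it does not follow from bookkeeping. For $(ii)\Rightarrow(i)$ one must produce, when the rank is deficient, a nonzero $z$ (e.g.\ a constant vector in the common left kernel tensored with a fixed spatial profile) annihilated by all $p$ observations. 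Neither direction appears in your proposal. Note also that the paper itself does not prove this lemma --- it imports it verbatim from \cite{Qin-Wang} --- so if the intent was to supply a self-contained proof, the entire argument still needs to be written; if the intent was merely to use the lemma, then the text you produced belongs to the proof of Proposition \ref{thm-sampling-unique}, where it is essentially correct but redundant with what the paper already does.
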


\begin{lemma}(\cite[Theorem 2.2]{Qin-Wang})
\label{yu-lemma-5-23-4}
Let $d_A$ and $q_{A,B}$ be given by (\ref{yu-5-23-12}) and (\ref{yu-5-23-13}), respectively.
   Then for each increasing sequence $\{t_j\}_{j=1}^{q_{A,B}}\subset\mathbb{R}$ with $t_{q_{A,B}}-t_1<d_A$,
\begin{eqnarray*}\label{yu5-23-14}
    \mbox{rank}\;(e^{At_1}B,e^{At_2}B,\cdots,e^{At_{q_{A,B}}}B)=
\mbox{rank}\;(B,AB,\cdots,A^{n-1}B).
\end{eqnarray*}

\end{lemma}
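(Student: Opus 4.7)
The plan is to reduce the problem to the controllable part of the system via the Kalman decomposition of Lemma \ref{lem-Kalman-controllability-decomposition}, and then invoke Lemmas \ref{yu-lemma-5-23-2} and \ref{yu-lemma-5-23-4}. Let $P$, $A_1,A_2,A_3,B_1$ be as in Lemma \ref{lem-Kalman-controllability-decomposition}, with $k:=\dim\mathfrak{R}$, and decompose $P^{-1}z=(z_1,z_2)^\top$ with $z_1\in L^2(\Omega;\mathbb{R}^k)$ and $z_2\in L^2(\Omega;\mathbb{R}^{n-k})$.

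First I would compute the observation in the new coordinates. Transposing the block structure of Proposition \ref{prop-relation-semigroups}, using $P^\top=P^{-1}$ together with $P^{-1}\mathcal{B}=(\chi_\omega B_1,0)^\top$, yields
\begin{equation*}
\mathcal{B}^{*}e^{t\mathcal{A}^{*}}z=\chi_\omega B_1^\top e^{t(\mathbb{I}_k\triangle+A_1^\top)}z_1\qquad\text{for every }t\geq 0,
\end{equation*}
so only the controllable component $z_1$ is observed. Writing $\mathcal{A}_1:=\mathbb{I}_k\triangle+A_1$ and $\mathcal{B}_1:=\chi_\omega B_1$, the hypothesis \eqref{samping-points-q} becomes $\mathcal{B}_1^{*}e^{t_j\mathcal{A}_1^{*}}z_1=0$ for $j=1,\dots,q_{A,B}$.

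Next I would verify that the invariants $d_A$ and $q_{A,B}$ behave well under the decomposition. Block upper triangularity of $P^{-1}AP$ gives $\sigma(A_1)\subset\sigma(A)$, hence $d_{A_1}\geq d_A$. For every column $b$ of $B$, writing $(b_1,0)^\top=P^{-1}b$, the identity $P^{-1}A^jb=(A_1^jb_1,0)^\top$ together with Cayley-Hamilton for the $k\times k$ matrix $A_1$ yields $\mathrm{rank}(b,Ab,\dots,A^{n-1}b)=\mathrm{rank}(b_1,A_1b_1,\dots,A_1^{k-1}b_1)$, hence $q_{A_1,B_1}=q_{A,B}$. Since $I$ is open, any $q_{A,B}$ distinct sample points in $I$ satisfy $t_{q_{A,B}}-t_1<|I|\leq d_A\leq d_{A_1}$, so Lemma \ref{yu-lemma-5-23-4} applied to $(A_1,B_1)$ combined with \eqref{yu-5-18-3} gives
\begin{equation*}
\mathrm{rank}\bigl(e^{A_1t_1}B_1,\dots,e^{A_1t_{q_{A,B}}}B_1\bigr)=\mathrm{rank}\bigl(B_1,A_1B_1,\dots,A_1^{k-1}B_1\bigr)=k.
\end{equation*}
Lemma \ref{yu-lemma-5-23-2} applied to $(A_1,B_1)$ with this full-rank conclusion then forces $z_1=0$.

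Finally I would translate the conclusion back. We have $P^{-1}z(x)\in\{0\}\times\mathbb{R}^{n-k}$ for a.e.\ $x\in\Omega$. Since $P$ is orthogonal and $\mathfrak{R}=P(\mathbb{R}^k\times\{0\})$ by \eqref{yu-5-18-2}, this is the same as $z(x)\in\mathfrak{R}^\perp$ a.e., and $\mathfrak{R}^\perp=(\mathrm{Range}(B,AB,\dots,A^{n-1}B))^\perp$ is exactly the kernel on the right-hand side of \eqref{yu-6-23-3}. The ``especially'' part is immediate from $z_1=0$ and the observation formula in the first step. The main obstacle will be the careful bookkeeping of the Kalman decomposition for the PDE system, in particular the observation formula in the decomposed coordinates and the identity $q_{A_1,B_1}=q_{A,B}$; once these are in place the result is a direct assembly of Lemmas \ref{lem-Kalman-controllability-decomposition}, \ref{yu-lemma-5-23-2}, and \ref{yu-lemma-5-23-4}.
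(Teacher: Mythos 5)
Your proposal does not prove the statement in question. The statement is Lemma \ref{yu-lemma-5-23-4}, a purely finite-dimensional fact about the matrices $A$ and $B$: for any increasing sequence $\{t_j\}_{j=1}^{q_{A,B}}$ with $t_{q_{A,B}}-t_1<d_A$, the block matrix $(e^{At_1}B,\ldots,e^{At_{q_{A,B}}}B)$ has the same rank as the Kalman matrix $(B,AB,\ldots,A^{n-1}B)$. What you have written is instead a proof of Proposition \ref{thm-sampling-unique} (the unique continuation property of the dual PDE system): your argument starts from the hypothesis \eqref{samping-points-q}, passes to the Kalman decomposition, and ends at the conclusion \eqref{yu-6-23-3} together with the ``especially'' part. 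Worse, your argument explicitly \emph{invokes} Lemma \ref{yu-lemma-5-23-4} applied to $(A_1,B_1)$ as its key step, so as a proof of that lemma it is circular: no argument for the rank identity itself appears anywhere in the proposal.

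For the record, the paper does not prove Lemma \ref{yu-lemma-5-23-4} either; it is quoted verbatim from \cite[Theorem 2.2]{Qin-Wang}. A genuine proof would have to show that a nonzero functional $\vartheta$ annihilating $e^{At_j}b$ at $q_{A,B}$ sample points within an interval of length less than $d_A$ must annihilate the whole Krylov space generated by the column $b$; this is a statement about the zeros of the exponential polynomials $t\mapsto \vartheta^\top e^{At}b$, and it is precisely where the spectral gap $d_A=\min\{\pi/|\mathrm{Im}\,\lambda|\}$ and the count $q_{A,B}$ enter. None of that analysis is present in your proposal. (Your reduction to $(A_1,B_1)$ and the identities $d_{A_1}\geq d_A$, $q_{A_1,B_1}= q_{A,B}$ do essentially reproduce the paper's proof of Proposition \ref{thm-sampling-unique} --- note the paper only needs $q_{A_1,B_1}\leq q_{A,B}$ there --- but that is a different statement from the one you were asked to prove.)
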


\begin{remark}
 $(a)$ We mention that $(ii)$ of Lemma \ref{yu-lemma-5-23-2} can be replaced by an interpolation inequality.  See   \cite[Proposition 1.3]{Wang-Yan-Yu} for details;
  $(b)$ The number $d_A$ is necessary to ensure Lemma \ref{yu-lemma-5-23-4}. Besides, the optimality of the number $q_{A,B}$ is stressed in some sense in \cite[Theorem 2.2]{Qin-Wang}.
\end{remark}
We now on the position to prove Proposition \ref{thm-sampling-unique}.
\begin{proof}[Proof of Proposition \ref{thm-sampling-unique}]
Let the matrices $P$, $\{A_j\}_{j=1}^3$ and $B_1$ be given in Lemma \ref{lem-Kalman-controllability-decomposition}. Let $z_1\in L^2(\Omega;\mathbb R^k)$ and $z_2 \in L^2(\Omega; \mathbb R^{n-k})$ satisfy that
\begin{eqnarray}\label{samping-points-q-1}
 P^\top z=(z_1,z_2)^\top.
\end{eqnarray}
By the second equality in (\ref{relation-C0-two}), it follows that for each $t\geq0$,
\begin{eqnarray}\label{samping-points-q-2}
 \mathcal B^* e^{t\mathcal A^*}z &=&
 \mathcal B^* e^{t\mathcal A^*} (P^{-1})^\top (z_1,z_2)^\top
 = ( P^{-1} e^{t \mathcal A} \mathcal B )^* (z_1,z_2)^\top
 \nonumber\\
 &=&\left( \chi_\omega B_1^\top e^{(T-t)(\mathbb I_k\triangle+ A_1^\top)}, 0 \right)
  (z_1,z_2)^\top
 = \chi_\omega B_1^\top e^{t(\mathbb I_k\triangle+ A_1^\top)} z_1.
\end{eqnarray}
From this and (\ref{samping-points-q}), one has that
\begin{eqnarray*}\label{samping-points-q-3}
  \chi_\omega B_1^\top e^{t_j(\mathbb I_k\triangle+ A_1^\top)} z_1 =0,\;\;j=1,\ldots,q_{A,B}.
\end{eqnarray*}
Using (\ref{yu-5-18-3}) and noting that $d_A\leq d_{A_1}$ and $q_{A,B}\geq q_{A_1,B_1}$,  we can apply Lemmas \ref{yu-lemma-5-23-2}, \ref{yu-lemma-5-23-4}, where $(A,B)$ is replaced by $(A_1,B_1)$  to get
\begin{eqnarray}\label{samping-points-q-4}
 z_1=0.
\end{eqnarray}
This, along with (\ref{samping-points-q-1}) and Lemma \ref{lem-Kalman-controllability-decomposition}, yields that for a.e. $x\in\Omega$,
\begin{eqnarray*}
 \langle v,z(x) \rangle_{\mathbb R^n}=  \langle P^{-1}v,(z_1(x),z_2(x) )^\top \rangle_{\mathbb R^n}
 =0,\;\;\mbox{when}\;\; v\in \mathfrak R,
\end{eqnarray*}
(Here, $\mathfrak R$ is given by \eqref{yu-5-16-5}.)
which leads to (\ref{yu-6-23-3}).
\par
 Finally, from (\ref{samping-points-q-2}) and (\ref{samping-points-q-4}), it follows that
\begin{eqnarray*}
 \mathcal B^*e^{t\mathcal A^*}z = 0
 \;\;\mbox{for each}\;\;
 t\geq0.
\end{eqnarray*}
 This ends the proof of Proposition \ref{thm-sampling-unique}.
\end{proof}

The following result is a direct consequence of Proposition \ref{thm-sampling-unique}.
\begin{corollary}\label{cor-switch-behavior}
Let $T>0$ and $z\in L^2(\Omega;\mathbb R^n)$ satisfy
that $I_{z,T} \neq (0,T)$, where
\begin{eqnarray*}\label{dual-solu-nodel-set}
I_{z,T} := \big\{ t\in(0,T) ~:~ \mathcal B^*e^{(T-t) \mathcal A^*}z  = 0 \}.
\end{eqnarray*}
Then $I_{z,T}$ has at most $\big([T/d_A]+1 \big) (q_{A,B}-1)$ elements,
where $A$ and $q_{A,B}$ are given by (\ref{yu-5-23-12}) and (\ref{yu-5-23-13}), respectively.
\end{corollary}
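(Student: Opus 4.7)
\textbf{Proof plan for Corollary \ref{cor-switch-behavior}.} The plan is to partition $(0,T)$ into $N := [T/d_A]+1$ consecutive subintervals each of length strictly less than $d_A$, and then apply a pigeonhole argument combined with Proposition \ref{thm-sampling-unique}. The key observation is that the definition of the floor gives $T/d_A < [T/d_A]+1 = N$, hence $T/N < d_A$ (with the convention $T/N = T < +\infty = d_A$ in the degenerate case $\sigma(A)\subset\mathbb R$). So the uniform partition
\begin{equation*}
 J_1 := (0, T/N],\; J_2 := (T/N, 2T/N],\; \ldots,\; J_N := ((N-1)T/N, T)
\end{equation*}
consists of $N$ sets each contained in an open interval of length at most $d_A$.

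Suppose, for contradiction, that $\sharp[I_{z,T}] > N(q_{A,B}-1)$. Then by the pigeonhole principle some $J_{k_0}$ contains at least $q_{A,B}$ distinct points of $I_{z,T}$, say $t_1,\ldots,t_{q_{A,B}}$. Since $\mathrm{diam}\,J_{k_0}\leq T/N<d_A$, I can enclose $\{t_1,\ldots,t_{q_{A,B}}\}$ in an open subinterval $\widetilde I\subset(0,T)$ with $|\widetilde I|\leq d_A$. Setting $s_j := T-t_j\in(0,T)\subset\mathbb R^+$, the points $\{s_j\}_{j=1}^{q_{A,B}}$ lie in the open interval $\{T-t:t\in\widetilde I\}\subset\mathbb R^+$, which has the same length $\leq d_A$. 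By the definition of $I_{z,T}$, each $t_j\in I_{z,T}$ yields
\begin{equation*}
 \mathcal B^{*} e^{s_j\mathcal A^{*}}z = \mathcal B^{*} e^{(T-t_j)\mathcal A^{*}}z = 0,\qquad j=1,\ldots,q_{A,B}.
\end{equation*}

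Now I apply Proposition \ref{thm-sampling-unique} with the interval $I$ there taken to be the translated open interval just constructed: it concludes that $\mathcal B^{*}e^{t\mathcal A^{*}}z=0$ for every $t\geq 0$. In particular, $\mathcal B^{*}e^{(T-t)\mathcal A^{*}}z=0$ for all $t\in(0,T)$, which means $I_{z,T}=(0,T)$, contradicting the standing hypothesis $I_{z,T}\neq(0,T)$. Hence $\sharp[I_{z,T}]\leq N(q_{A,B}-1)=([T/d_A]+1)(q_{A,B}-1)$, as claimed.

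The bulk of the work has already been done in Proposition \ref{thm-sampling-unique}, so no serious obstacle is expected here; the only mild subtlety is making sure that the $q_{A,B}$ chosen points genuinely fit inside an \emph{open} subinterval of $\mathbb R^+$ of length at most $d_A$, which is exactly why the strict inequality $T/N<d_A$ (rather than $\leq d_A$) for the partition is important. The degenerate case $d_A=+\infty$ is covered automatically by the convention $1/0=+\infty$, in which case $N=1$ and the conclusion reduces to $\sharp[I_{z,T}]\leq q_{A,B}-1$.
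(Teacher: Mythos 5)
Your proposal is correct and follows essentially the same route as the paper: argue by contradiction, use the pigeonhole principle to place $q_{A,B}$ points of $I_{z,T}$ inside an open interval of length at most $d_A$, and invoke Proposition \ref{thm-sampling-unique} to force $\mathcal B^*e^{(T-\cdot)\mathcal A^*}z\equiv 0$, contradicting $I_{z,T}\neq(0,T)$. You are merely more explicit than the paper about the uniform partition and the harmless time reflection $s_j=T-t_j$.
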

\begin{proof}
    Since $I_{z,T} \neq (0,T)$, we have that $\mathcal{B}^*e^{(T-\cdot)\mathcal{A}^*}z$ is not a zero function over $\mathbb{R}^+$. By contradiction, we suppose that
    $$
    \sharp [I_{z,T}]> \big([T/d_A]+1 \big) (q_{A,B}-1).
    $$
     Then there is an open interval $\hat{I}\subset (0,T)$ with  $|\hat{I}|\leq d_A$ so that $\sharp[I_{z,T}\cap \hat{I}]\geq q_{A,B}$. Thus, by Proposition \ref{thm-sampling-unique}, we have that $\mathcal{B}^*e^{(T-\cdot)\mathcal{A}^*}\equiv 0$, which leads to
      a contradiction. This finishes the proof of Corollary \ref{cor-switch-behavior}.
\end{proof}

\subsection{Local maximum principle}

This section deals with the maximum principle of the problem $(\mathcal{TP})_{y_0}$.
It deserves mentioning that the standard maximum principle may not hold for $(\mathcal{TP})_{y_0}$ and what we get is the local maximum principle. (see  \cite[Chapter 4]{Wang-Wang-Xu-Zhang}.)

\begin{proposition}\label{thm-TP-PMP}
Let $y_0\in L^2(\Omega;\mathbb R^n)\setminus\{0\}$.
Then for each $T\in(0,T^*_{y_0})$, there is a  multiplier
$\xi_T\in \mathfrak{L}\setminus\{0\}$ (where $\mathfrak{L}$ is given by \eqref{yu-5-16-5}), with the property
\begin{equation}\label{adjoint-eq}
 \mathcal B^* e^{(T-\cdot)\mathcal A^*}\xi_T  \in L^1(0,T;L^2(\Omega;\mathbb R^m))\setminus\{0\},
\end{equation}
so that if $u^*_{y_0}$ is an optimal control of $(\mathcal{TP})_{y_0}$, then
\begin{eqnarray}\label{PMP-eq}
    \left\langle u^*_{y_0}(t),\mathcal B^* e^{(T-t)\mathcal A^*}\xi_T  \right\rangle_{L^2(\Omega;\mathbb R^m)}
=\max_{v\in B^m_1(0)}  \Big\langle v,\mathcal B^* e^{(T-t)\mathcal A^*}\xi_T \Big\rangle_{L^2(\Omega;\mathbb R^m)}
\;\;\mbox{for a.e.}\;\;
t\in (0,T).
\end{eqnarray}
\end{proposition}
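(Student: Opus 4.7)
The plan is to apply Hahn--Banach separation in the controllable subspace $\mathfrak L$, choosing the multiplier to be a supporting functional of the reachable set at the trajectory endpoint. By Corollary \ref{yu-lemma-5-17-2}, $y_0\in\mathfrak L$, and by the block-triangular decomposition in Proposition \ref{prop-relation-semigroups}, both $e^{T\mathcal A}y_0$ and the input integral $\int_0^T e^{(T-s)\mathcal A}\mathcal B u(s)\,ds$ lie in $\mathfrak L$. Consequently the reachable set
\[R_T:=\bigl\{y(T;y_0,u):u\in L^\infty(0,T;B_1^m(0))\bigr\}\]
is a convex closed subset of $\mathfrak L$ (closedness coming from weak compactness of the control ball together with compactness of the parabolic input operator). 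Since $T<T^*_{y_0}$, one has $0\notin R_T$: otherwise some admissible control would drive $y_0$ to zero before $T^*_{y_0}$, contradicting its minimality.

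Hahn--Banach in the Hilbert space $\mathfrak L$ then produces a separating $\xi_T\in\mathfrak L\setminus\{0\}$, which I would arrange to be an outward normal to a supporting hyperplane of $R_T$ at the trajectory endpoint $y^*(T):=y(T;y_0,u^*_{y_0})$, so that $\langle\xi_T,y\rangle\le\langle\xi_T,y^*(T)\rangle$ for every $y\in R_T$. The optimality of $u^*_{y_0}$ is what forces $y^*(T)$ to realize this maximum: if some $y\in R_T$ satisfied $\langle\xi_T,y\rangle>\langle\xi_T,y^*(T)\rangle$, one could perturb $u^*_{y_0}|_{[0,T]}$ toward a control reaching $y$ and, combined with a suitably modified tail on $[T,T^*_{y_0}]$, beat $T^*_{y_0}$. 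Expanding $\langle\xi_T,y(T;y_0,u)\rangle$ via Duhamel and taking pointwise maxima of the integrand then yields \eqref{PMP-eq}. The non-triviality of $\mathcal B^*e^{(T-\cdot)\mathcal A^*}\xi_T$ required by \eqref{adjoint-eq} follows from $\xi_T\neq 0$ in $\mathfrak L$ combined with the unique continuation of Proposition \ref{thm-sampling-unique} (cf.\ Remark \ref{remark2.8wang}): vanishing of this adjoint state on $(0,T)$ would force $\xi_T=0$ within the controllable block.

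The principal obstacle is rigorously constructing the supporting functional $\xi_T$ at the prescribed point $y^*(T)$. Because the parabolic input map is compact, $R_T$ has empty interior in $\mathfrak L$, so the standard supporting hyperplane theorem does not automatically yield a functional at every boundary point. The workaround leverages the strict separation $0\notin R_T$ (valid precisely because $T<T^*_{y_0}$) together with the optimality of $u^*_{y_0}$ via a time-shift argument: one extends to the full interval $[0,T^*_{y_0}]$, fixes the control on $[T,T^*_{y_0}]$ to equal $u^*_{y_0}|_{[T,T^*_{y_0}]}$, separates $0$ from the resulting restricted reachable set (a convex set having $0$ on its boundary, attainable via $u^*_{y_0}$), and pulls the separating functional back through the adjoint semigroup to time $T$. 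This time-shift mechanism is the heart of the local maximum principle machinery developed in \cite[Chapter 4]{Wang-Wang-Xu-Zhang}, and is what makes the "local" (rather than global) form of the principle both necessary and accessible in the infinite-dimensional setting.
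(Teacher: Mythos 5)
Your overall strategy (Hahn--Banach separation in the controllable subspace, with the multiplier pulled back through the adjoint semigroup) points in the right direction, and you correctly identify the central obstruction: the reachable set has empty interior in $\mathfrak L$, so no supporting hyperplane at $y^*(T)$ is available for free. But your proposed workaround does not remove this obstruction; it only relocates it. The restricted reachable set at time $T^*_{y_0}$ that you construct (controls frozen to $u^*_{y_0}$ on $[T,T^*_{y_0}]$) is again a convex set with empty interior containing $0$ as a boundary point, and ``separating $0$ from it'' is exactly the supporting-hyperplane problem you started with. Likewise, the perturbation step (``perturb $u^*_{y_0}$ toward a control reaching $y$ and beat $T^*_{y_0}$'') silently requires that a small deviation of the state at time $T$ can be corrected by a small admissible control on $[T,T^*_{y_0}]$; this is a quantitative controllability statement, not a soft convexity fact.

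The ingredient your argument is missing is precisely Proposition \ref{yu-lemma-5-18-2}: the $L^\infty$ null controllability, which holds for the reduced pair $(A_1,B_1)$ of Lemma \ref{lem-Kalman-controllability-decomposition} because that pair satisfies the Kalman rank condition \eqref{yu-5-18-3}. The paper first replaces $(\mathcal{TP})_{y_0}$ by the equivalent problem $(\widehat{\mathcal{TP}})_{\hat y_0}$ for the reduced system \eqref{yu-5-18-9} (Lemma \ref{yu-proposition-5-18-1}), and then separates the reachable set $\widehat Y_R(t;\hat y_0)$ not from $\{0\}$ but from the controllable-to-zero set $\widehat Y_C(t,\widehat T^*_{\hat y_0})$; the null controllability with bounded controls is what gives the latter set enough interior for Hahn--Banach to apply (this is the content of the results of \cite{Wang-Wang-Xu-Zhang} that Lemma \ref{lem-PMP-reduced} invokes), and the multiplier is then transported back to $\xi_T=P(\eta_T,0)^\top$ via the second identity in \eqref{relation-C0-two}. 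Your derivation of the non-triviality \eqref{adjoint-eq} from unique continuation together with $\xi_T\in\mathfrak L$ is fine, but without the null controllability step the separation argument, and hence the maximum condition \eqref{PMP-eq} along the optimal trajectory, cannot be completed.
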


\begin{remark}
 The multiplier $\xi_T$ in (\ref{PMP-eq}) is independent of the choice of the optimal controls to $(\mathcal{TP})_{y_0}$. The equality (\ref{PMP-eq}) is called the local maximum principle introduced in \cite[Section 4.2]{Wang-Wang-Xu-Zhang}.

\end{remark}
\par
Our strategy to prove Proposition \ref{thm-TP-PMP} is as follows:
We set up a new time optimal control problem which is equivalent to $(\mathcal{TP})_{y_0}$,
then build up the local  maximum principle for the new problem, and finally go back to
to  $(\mathcal{TP})_{y_0}$. To introduce the new problem, we
recall Lemma \ref{lem-Kalman-controllability-decomposition} for  $A_1$, $B_1$, $k$ and $P$.
Write $\hat{y}(\cdot;\hat{y}_0,u)$ for the solution to
 the following reduced control system:
 \begin{equation}\label{yu-5-18-9}
\begin{cases}
    \hat{y}_t=(\mathbb{I}_k\triangle+A_1)\hat{y}+B_1u&\mbox{in}\;\;
    \Omega\times\mathbb{R}^+,\\
    \hat{y}=0&\mbox{on}\;\;\partial\Omega\times\mathbb{R}^+,\\
    \hat{y}(0)=\hat{y}_0 \in L^2(\Omega;\mathbb R^k),
\end{cases}
\end{equation}
 where  $u$ is taken from $L^\infty(\mathbb R^+;L^2(\Omega;\mathbb R^m))$.
 The new time optimal control problem reads:
  \begin{equation}\label{yu-5-23-b-1}
    (\widehat{\mathcal{TP}})_{\hat{y}_0}:\;\;\;\;\widehat{T}^*_{\hat{y}_0}:=\inf\{\hat{t}>0
    ~:~  \exists\;u\in L^\infty(\mathbb{R}^+;B_1^m(0))
    \;\;\mbox{s.t.}\;\hat{y}(\hat{t};\hat{y}_0,u)=0\}.
\end{equation}
 {\it Notice that the new problem holds the state system \eqref{yu-5-18-9}, where
 $(A_1,B_1)$ satisfies the Kalman rank condition which plays an important role in getting the local maximum principle.}

The following lemma gives the equivalence between $(\mathcal{TP})_{y_0}$ and $(\widehat{\mathcal{TP}})_{\hat{y}_0}$.

\begin{lemma}\label{yu-proposition-5-18-1}
    Let $y_0\in L^2(\Omega;\mathbb R^n)\setminus\{0\}$ and  $\hat y_0\in L^2(\Omega;\mathbb R^k)\setminus\{0\}$
    satisfy $P^{-1}y_0=(\hat y_0,0)^{\top}$. Then
    the problems $(\mathcal{TP})_{y_0}$ and $(\widehat{\mathcal{TP}})_{\hat{y}_0}$ are equivalent, i.e.,
    they share the same optimal time and the same optimal controls (if one of them has an optimal control).
\end{lemma}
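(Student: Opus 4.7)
The plan is to transport the state equation \eqref{yu-5-16-1} through the Kalman decomposition matrix $P$ and show that, under the hypothesis $P^{-1}y_0=(\hat y_0,0)^\top$, the equation is completely decoupled in the block-triangular form: the top block reproduces the reduced system \eqref{yu-5-18-9}, while the bottom block stays identically zero for every admissible $u$. Reaching the target $0\in L^2(\Omega;\mathbb R^n)$ in the original system then reduces to reaching $0\in L^2(\Omega;\mathbb R^k)$ in the reduced one, and this should yield the claim.

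Concretely, first I would apply $P^{-1}$ to the mild formula
\[
 y(t;y_0,u)= e^{t\mathcal A} y_0 + \int_0^t  e^{(t-s)\mathcal A} \mathcal B u(s)ds,
\]
writing
\[
 P^{-1}y(t;y_0,u) = \big(P^{-1}e^{t\mathcal A}P\big)\big(P^{-1}y_0\big)+\int_0^t \big(P^{-1}e^{(t-s)\mathcal A}\mathcal B\big) u(s)\,ds.
\]
Using Proposition \ref{prop-relation-semigroups} together with $P^{-1}y_0=(\hat y_0,0)^\top$, the bottom block contributes $e^{t(\mathbb I_{n-k}\triangle+A_3)}\cdot 0=0$ from the homogeneous part, and the inhomogeneous part vanishes because $P^{-1}\mathcal B$ has its lower block equal to $0$. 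The top block yields
\[
 e^{t(\mathbb I_k\triangle+A_1)}\hat y_0+\int_0^t e^{(t-s)(\mathbb I_k\triangle+A_1)}\chi_\omega B_1 u(s)\,ds,
\]
which is exactly $\hat y(t;\hat y_0,u)$ from \eqref{yu-5-18-9}. Hence
\[
 y(t;y_0,u)=P\,\big(\hat y(t;\hat y_0,u),\,0\big)^\top \quad\text{for all } t\geq 0.
\]

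Since $P$ is invertible, this identity gives, for every $u\in L^\infty(\mathbb R^+;B_1^m(0))$ and every $t>0$, the equivalence $y(t;y_0,u)=0\iff \hat y(t;\hat y_0,u)=0$. It follows that the admissible sets of the two problems coincide, so the optimal times agree, $T^*_{y_0}=\widehat T^*_{\hat y_0}$, and a control $u^*$ is optimal for $(\mathcal{TP})_{y_0}$ if and only if it is optimal for $(\widehat{\mathcal{TP}})_{\hat y_0}$.

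There is no real obstacle here; the only point to be careful about is checking that the \emph{inhomogeneous} part of the transformed equation has zero lower block (this is the second identity in \eqref{relation-C0-two}), so no control can excite the $(n-k)$-dimensional uncontrollable component. Once this is observed, the decoupling is complete and the equivalence is immediate from the invertibility of $P$.
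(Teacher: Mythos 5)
Your proposal is correct and follows essentially the same route as the paper: both apply $P^{-1}$ to the mild solution formula, invoke Proposition \ref{prop-relation-semigroups} to obtain $P^{-1}y(t;y_0,u)=(\hat y(t;\hat y_0,u),0)^{\top}$, and conclude the equivalence from the invertibility of $P$. No gaps.
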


\begin{proof}
    We first claim that for each control $u\in L^\infty(\mathbb R^+;L^2(\Omega;\mathbb R^m))$,
    \begin{eqnarray}\label{relation-solutions}
     P^{-1}y(t;y_0,u)=(\hat y(t;\hat y_0,u),0)^{\top}
     \;\;\mbox{for each}\;\;t\geq0.
    \end{eqnarray}
To this end, we use Proposition \ref{prop-relation-semigroups} to see that when  $u\in L^\infty(\mathbb R^+;L^2(\Omega;\mathbb R^m))$,
\begin{eqnarray*}
 P^{-1} y(t;y_0,u) &=&
 P^{-1} e^{t\mathcal A}P P^{-1}y_0 + P^{-1}\int_0^t  e^{(t-s)\mathcal A} \mathcal B u(s) ds
 \nonumber\\
 &=& \left( P^{-1} e^{t\mathcal A}P \right) (\hat y_0,0)^{\top} + \int_0^t  \left( P^{-1}e^{(t-s)\mathcal A} \mathcal B \right) u(s) ds
 \nonumber\\
 &=& \left( e^{t(\mathbb{I}_k\Delta+A_1 )}\hat y_0,0 \right)^{\top}  + \int_0^t  \left( e^{(t-s)(\mathbb{I}_k\Delta+A_1 )}\chi_\omega B_1, 0 \right)^{\top} u(s) ds,
\end{eqnarray*}
which leads to (\ref{relation-solutions}).

 Next, from (\ref{relation-solutions}), we find that for each $u\in L^\infty(\mathbb R^+;L^2(\Omega;\mathbb R^m))$,
 \begin{eqnarray*}
  y(t;y_0,u)=0 \;\mbox{for each}\; t\geq 0\;\;\mbox{if and only if}\;\;
  \hat y(t;\hat y_0,u)=0 \;\mbox{for each}\; t\geq 0.
 \end{eqnarray*}
This, along with (\ref{original-tp}) and (\ref{yu-5-23-b-1}), shows that
      $(\mathcal{TP})_{y_0}$ and $(\widehat{\mathcal{TP}})_{\hat{y}_0}$
      are equivalent.

      Hence, we finish the proof of Lemma \ref{yu-proposition-5-18-1}.
      \end{proof}

The next lemma gives  the local maximum principle for the problem $(\widehat{\mathcal{TP}})_{\hat{y}_0}$.
\begin{lemma}\label{lem-PMP-reduced}
Let $\hat y_0\in L^2(\Omega;\mathbb R^k)\setminus\{0\}$. Then for
 each $T\in(0,\widehat T^*_{\hat y_0})$, there is a nontrivial multiplier $\eta_T \in L^2(\Omega;\mathbb R^k)\setminus\{0\}$, with the property
\begin{eqnarray}\label{adjoint-eq-reduced}
 f_T(\cdot):=
 \chi_\omega B_1^{\top} e^{(T-\cdot) (\mathbb I_k\triangle+A_1^{\top} )} \eta_T\in L^1(0,T;L^2(\Omega;\mathbb R^m))\setminus\{0\},
\end{eqnarray}
so that if $\hat u_{\hat{y}_0}$ is an optimal control to $(\widehat{\mathcal{TP}})_{\hat{y}_0}$, then
\begin{eqnarray}\label{PMP-eq-reduced}
    \big\langle \hat u_{\hat{y}_0}(t),f_T(t) \big\rangle_{L^2(\Omega;\mathbb R^m)}
=\max_{v\in B^m_1(0)}  \big\langle v, f_T(t)
\big\rangle_{L^2(\Omega;\mathbb R^m)}
\;\;\mbox{for a.e.}\;\;
t\in (0,T).
\end{eqnarray}
\end{lemma}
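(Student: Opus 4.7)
My plan is to derive Lemma \ref{lem-PMP-reduced} by applying the local maximum principle from \cite[Chapter 4]{Wang-Wang-Xu-Zhang} to the reduced problem $(\widehat{\mathcal{TP}})_{\hat y_0}$. The pair $(A_1,B_1)$ satisfies the Kalman rank condition by \eqref{yu-5-18-3}, so Proposition \ref{yu-lemma-5-18-2} applied to $(A_1,B_1)$ supplies two ingredients: the $L^\infty_{x,t}$ null controllability of the reduced system \eqref{yu-5-18-9} with a quantitative cost, and the adjoint observability inequality
\[
\bigl\|e^{T(\mathbb{I}_k\triangle+A_1^\top)}\eta\bigr\|_{L^2(\Omega;\mathbb R^k)}\le Ce^{C/T}\int_0^T\bigl\|\chi_\omega B_1^\top e^{t(\mathbb{I}_k\triangle+A_1^\top)}\eta\bigr\|_{L^1(\Omega;\mathbb R^m)}\,dt
\]
for every $\eta\in L^2(\Omega;\mathbb R^k)$.

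Fix $T\in(0,\widehat{T}^*_{\hat y_0})$ and an optimal $\hat u_{\hat y_0}$, and set $\hat y_T:=\hat y(T;\hat y_0,\hat u_{\hat y_0})$; note $\hat y_T\neq 0$ since $T<\widehat{T}^*_{\hat y_0}$. A short concatenation argument shows that $T$ is the minimum time needed to drive $\hat y_0$ into $\{\hat y_T\}$ under admissible controls: any strictly faster driver on $[0,T']$ with $T'<T$, followed by $\hat u_{\hat y_0}|_{[T,\widehat{T}^*_{\hat y_0}]}$, would beat $\widehat{T}^*_{\hat y_0}$. With $T$ thus identified as optimal for this auxiliary problem, I next claim $\hat y_T$ sits on the boundary of the convex reachable set
\[
R(T;\hat y_0):=\bigl\{\hat y(T;\hat y_0,u):u\in L^\infty([0,T];B_1^m(0))\bigr\}\subset L^2(\Omega;\mathbb R^k).
\]
Were $B_\varepsilon(\hat y_T)\subset R(T;\hat y_0)$ for some $\varepsilon>0$, the quantitative null-controllability cost from Proposition \ref{yu-lemma-5-18-2} combined with the continuity of $t\mapsto\hat y(t;\hat y_0,\hat u_{\hat y_0})$ would supply an admissible control steering $\hat y_0$ to $0$ strictly before $\widehat{T}^*_{\hat y_0}$, a contradiction.

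Since $R(T;\hat y_0)$ is convex and $\hat y_T$ lies on its boundary, Hahn-Banach yields a supporting functional $\eta_T\in L^2(\Omega;\mathbb R^k)\setminus\{0\}$ with
\[
\bigl\langle\eta_T,\,\hat y(T;\hat y_0,u)-\hat y_T\bigr\rangle_{L^2(\Omega;\mathbb R^k)}\le 0\quad\text{for every admissible }u.
\]
Expanding $\hat y(T;\hat y_0,u)-\hat y_T$ via the Duhamel formula for \eqref{yu-5-18-9} and transposing the semigroup to the adjoint variable converts this into
\[
\int_0^T\bigl\langle f_T(s),\,u(s)-\hat u_{\hat y_0}(s)\bigr\rangle_{L^2(\Omega;\mathbb R^m)}\,ds\le 0,
\]
with $f_T$ exactly the expression in \eqref{adjoint-eq-reduced}; pointwise maximization over $B_1^m(0)$ for a.e. $s\in(0,T)$ then delivers \eqref{PMP-eq-reduced}. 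If $f_T$ vanished identically in $L^1(0,T;L^2(\Omega;\mathbb R^m))$, the observability inequality above would force $\eta_T=0$, contradicting the Hahn-Banach construction, so \eqref{adjoint-eq-reduced} also holds.

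The main obstacle is the boundary step: the parabolic reachable set $R(T;\hat y_0)$ typically lives inside a proper (albeit dense) smoothed subspace of $L^2(\Omega;\mathbb R^k)$, so it may have empty $L^2$-interior and the finite-dimensional interior-point contradiction does not apply verbatim. The quantitative cost estimate for null controls in Proposition \ref{yu-lemma-5-18-2} is exactly what lets one convert a hypothetical fatness of $R(T;\hat y_0)$ around $\hat y_T$ into an admissible control reaching $0$ below $\widehat{T}^*_{\hat y_0}$.
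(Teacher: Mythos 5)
Your overall strategy (convexity, Hahn--Banach, a duality/transposition step to produce $f_T$, and the observability inequality to rule out $f_T\equiv 0$) is in the right spirit, but the central step has a genuine gap that you flag yourself without actually resolving. You propose to separate the single point $\hat y_T$ from the reachable set $R(T;\hat y_0)$ by supporting that convex set at a boundary point. In $L^2(\Omega;\mathbb R^k)$ the set $R(T;\hat y_0)$ is convex and bounded, but by parabolic smoothing it sits inside a translate of a bounded subset of a compactly embedded smoother space and therefore has empty interior; consequently \emph{every} point of it is a boundary point, and a closed bounded convex set with empty interior need not admit a supporting functional at a prescribed point (Bishop--Phelps only gives density of support points, not support at the particular point $\hat y_T$ you need). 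Your proposed remedy --- using the quantitative null-controllability cost to convert a hypothetical ball $B_\varepsilon(\hat y_T)\subset R(T;\hat y_0)$ into a faster admissible control --- only shows that $\hat y_T$ is not an interior point; it does not produce the supporting hyperplane, which is the object the lemma actually requires.

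The paper closes exactly this gap by separating two \emph{sets} rather than a point and a set: the reachable set $\widehat Y_R(T;\hat y_0)$ and the constrained null-controllable set $\widehat Y_C(T,\widehat T^*_{\hat y_0})$ of states that can still be steered to $0$ on $(T,\widehat T^*_{\hat y_0})$ with controls in $B_1^m(0)$. The $L^\infty$ null controllability with cost from Proposition \ref{yu-lemma-5-18-2}, applied to $(A_1,B_1)$ (which satisfies the Kalman condition by \eqref{yu-5-18-3}), shows that $\widehat Y_C$ contains an $L^2$-ball centered at the origin and hence has nonempty interior; this is precisely the hypothesis that makes Hahn--Banach separation applicable, and since the optimal state $\hat y(T;\hat y_0,\hat u_{\hat y_0})$ lies in both sets, it maximizes the separating functional over $\widehat Y_R$, which yields \eqref{PMP-eq-reduced}. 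The paper delegates these two steps to \cite[Theorems 4.3 and 4.4]{Wang-Wang-Xu-Zhang}. To repair your argument you must either prove directly that $\hat y_T$ is a support point of $R(T;\hat y_0)$, or switch to the two-set separation just described; the remainder of your proposal (Duhamel and transposition to identify $f_T$ as in \eqref{adjoint-eq-reduced}, pointwise maximization over $B_1^m(0)$, and the observability inequality forcing $f_T\not\equiv 0$ from $\eta_T\neq 0$) then goes through.
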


\begin{proof}
Our proof is based on the method provided in \cite[Theorems 4.3 and 4.4]{Wang-Wang-Xu-Zhang}.
    Given $0<t_1<t_2<+\infty$, define  the following controllable subspace  over $(t_1,t_2)$ with constrained controls:
\begin{eqnarray*}\label{yu-5-23-2}
    \widehat{Y}_C(t_1,t_2)&:=& \biggl\{f\in L^2(\Omega;\mathbb R^k) ~:~
\exists\; u\in L^\infty(\mathbb{R}^+; B^m_1(0)) \;\;\mbox{s.t.}
\nonumber\\
&\;&e^{(t_2-t_1)(\mathbb{I}_k\triangle+A_1)}f
+\int_{t_1}^{t_2}\chi_{\omega}B_1e^{(t_2-t)(\mathbb{I}_k\triangle+A_1)}u(t)dt=0\biggl\}.
\end{eqnarray*}
  Given $t>0$,  define the following reachable set of the  system (\ref{yu-5-18-9}):
\begin{eqnarray*}\label{yu-5-23-3}
    \widehat{Y}_R(t;\hat{y}_0):=\{\hat{y}(t;\hat{y}_0,u) ~:~
     u\in L^\infty(\mathbb{R}^+;B^m_1(0))\}.
\end{eqnarray*}
    Since $\mbox{rank}\;(B_1,A_1B_1,\cdots,A_1^{k-1}B_1)=k$ (see (\ref{yu-5-18-3})),
    it follows  by
    Proposition \ref{yu-lemma-5-18-2} and \cite[Theorem 4.4]{Wang-Wang-Xu-Zhang} that for each
    $t\in(0,\widehat{T}^*_{\hat{y}_0})$, $\widehat{Y}_R(t;\hat{y}_0)$ and $\widehat{Y}_C(t,\widehat{T}^*_{\hat{y}_0})$ are separable in $(L^2(\Omega))^k$.
    Thus, by \cite[Theorem 4.3]{Wang-Wang-Xu-Zhang},
    for each $T>0$, there is $\eta_T\in L^2(\Omega;\mathbb{R}^k)\setminus\{0\}$, with (\ref{adjoint-eq-reduced}), so that  (\ref{PMP-eq-reduced}) holds for any optimal
     control to  the problem $(\widehat{\mathcal{TP}})_{\hat{y}_0}$ (if it has an optimal control).
     This completes the proof of Lemma \ref{lem-PMP-reduced}.
\end{proof}

We are now in the position to prove Proposition \ref{thm-TP-PMP}.
\begin{proof}[Proof of Proposition \ref{thm-TP-PMP}]
Arbitrarily fix $y_0\in L^2(\Omega;\mathbb R^n)\setminus\{0\}$. Let $\hat y_0\in L^2(\Omega;\mathbb R^k)\setminus\{0\}$ satisfy
    $P^{-1}y_0=(\hat y_0,0)^{\top}$.
Then according to Lemma \ref{yu-proposition-5-18-1}, the following conclusions  are true:
\begin{enumerate}
  \item[] (C1) ~~$T^*_{y_0} = \widehat T^*_{\hat y_0}$;
  \item[] (C2) ~~$(\mathcal{TP})_{y_0}$ and $(\widehat{\mathcal{TP}})_{\hat{y}_0}$ has same optimal controls,
  if one of them has optimal controls.
\end{enumerate}
Arbitrarily fix $T\in (0,T^*_{y_0})$.
Then by the above (C1), we have $T\in (0,\widehat T^*_{\hat y_0})$.
By this, we can apply Lemma \ref{lem-PMP-reduced} to find a
 a multiplier
 $\eta_T \in L^2(\Omega;\mathbb R^k)\setminus\{0\}$, with  (\ref{adjoint-eq-reduced}),
 so that (\ref{PMP-eq-reduced}) holds for any optimal control to $(\widehat{\mathcal{TP}})_{\hat{y}_0}$ (if it has optimal controls).

 Let
 \begin{eqnarray}\label{yu-7-2-1}
  \xi_T := (P^{-1})^\top(\eta_T,0)^\top(=P(\eta_T,0)^\top).
 \end{eqnarray}
 Then by  the second equality in (\ref{relation-C0-two}), we see  that for each $t\in(0,T)$,
 \begin{eqnarray}\label{2.28,7.5}
  \mathcal B^* e^{(T-t)\mathcal A^*}\xi_T &=&
  \mathcal B^* e^{(T-t)\mathcal A^*} (P^{-1})^\top (\eta_T,0)^\top
  = \left( P^{-1} e^{(T-t)\mathcal A} \mathcal B\right)^* (\eta_T,0)^\top
  \nonumber\\
  &=&\left( \chi_\omega B_1^\top e^{(T-t)(\mathbb I_k\triangle+ A_1^\top)}, 0 \right)
  (\eta_T,0)^\top
  =\chi_\omega B_1^\top e^{(T-t)(\mathbb I_k\triangle+ A_1^\top)} \eta_T.
 \end{eqnarray}
 From (\ref{adjoint-eq-reduced}) and \eqref{2.28,7.5}, we see that $\xi_T$ satisfies
 (\ref{adjoint-eq}), while from (\ref{adjoint-eq-reduced}) and the above (C2), we find that
 (\ref{PMP-eq}), with the above $\xi_T$, holds for any optimal control to $(\mathcal{TP})_{y_0}$
 (if it has optimal controls).

 Finally,
  by the first equality in (\ref{yu-5-18-2}), (\ref{yu-7-2-1}) and \eqref{yu-5-16-5}, we have $\xi_T\in\mathfrak{L}\setminus\{0\}$. This ends the proof of Proposition \ref{thm-TP-PMP}.
\end{proof}

\section{Proof of main results}

We are now on the position to prove Theorem \ref{yu-theorem-5-24-1}.
\begin{proof}[Proof of Theorem \ref{yu-theorem-5-24-1}]
Arbitrarily fix $y_0\in L^2(\Omega;\mathbb R^n)\setminus\{0\}$ which satisfies $(\textbf{A})_{y_0}$.
Then from the note ($a_1$) in Section 1.1, the problem
$(\mathcal{TP})_{y_0}$ has at least one optimal control, from which, one can easily check that
$T^*_{y_0}>0$.

Before, proving the theorem, we will show a {\it key conclusion}. To state it,
we arbitrarily fix $T\in (0,T^*_{y_0})$. According to Proposition \ref{thm-TP-PMP}, there is $\xi_T\in\mathfrak{L}\setminus\{0\}$ so that (\ref{PMP-eq}) holds.
  Then by (\ref{adjoint-eq}) in Proposition \ref{thm-TP-PMP} and Corollary \ref{cor-switch-behavior}, we
  have
  $$
  \sharp[\{t\in(0,T):\mathcal{B}^*e^{(T-t)\mathcal{A}^*}\xi_T=0\}]<+\infty.
  $$
  So we can find $p:=p(\xi_T)\in\mathbb{N}$ so that
\begin{eqnarray}\label{3.1,7.5wang}
    \mathfrak{T}_{\xi_T}:=\{t\in(0,T):\mathcal{B}^*e^{(T-t)\mathcal{A}^*}\xi_T=0\}\cup\{t_0=0\}
    :=\{t_i\}_{i=0}^p.
   \end{eqnarray}

\par
    The above-mentioned {\it key conclusion} is as: there is a unique left-continuous function $f_{\xi_T}$ in $\mathcal{PC}([0,T);B^m_1(0))$ so that
\begin{eqnarray}\label{def-f-xiT}
f_{\xi_T}(t)=
 \frac{ \mathcal B^*e^{(T-t)\mathcal A^*} \xi_{T} }
 { \|\mathcal B^*e^{(T-t)\mathcal A^*} \xi_{T} \|_{L^2(\Omega;\mathbb R^m)} },
 ~~&t\in[0,T)\setminus \mathfrak{T}_{\xi_T}.
\end{eqnarray}
  To show \eqref{def-f-xiT}, it suffices to prove  that for each $t_i$ ($0\leq i\leq p$),
\begin{equation}\label{yu-5-24-8}
    \lim_{t\to t_i^-}\frac{\mathcal{B}^* e^{(T-t)\mathcal{A}^*}\xi_T}
{\|\mathcal{B}^*e^{(T-t)\mathcal{A}^*}\xi_T\|_{L^2(\Omega;\mathbb{R}^m)}}\;\;\mbox{exists}
\end{equation}
    and
\begin{equation}\label{yu-5-25-9}
    \lim_{t\to t_i^+}\frac{\mathcal{B}^* e^{(T-t)\mathcal{A}^*}\xi_T}
{\|\mathcal{B}^* e^{(T-t)\mathcal{A}^*}\xi_T\|_{L^2(\Omega;\mathbb{R}^m)}}\;\;\mbox{exists}.
\end{equation}
   To show    \eqref{yu-5-24-8} and \eqref{yu-5-25-9}, we arbitrarily fix $0\leq i\leq p$ and  divide the proof into several steps.
\vskip 5pt
   \noindent \emph{Step 1.  We prove that
\begin{equation}\label{yu-5-24-9}
    \chi_{\omega}e^{(T-t)\mathbb{I}_n\triangle}\xi_T\neq 0\;\;\mbox{for any}\;\;t\in[0,T).
\end{equation}}
\par
    Actually, if there exists a $\hat{t}\in[0,T)$ such that $\chi_{\omega}e^{\mathbb{I}_n\triangle (T-\hat{t})}\xi_T=0$, then by Proposition \ref{thm-sampling-unique} (with $B:=\mathbb{I}_n$ and $A:=0$), we can get that $\xi_T=0$, which  contradicts  our assumption. Hence (\ref{yu-5-24-9}) holds.
\vskip 5pt
    \noindent \emph{Step 2. We define two numbers.}

 By (\ref{yu-5-24-9}), we have that, for each $i\in\{0,1,\ldots,p\}$,
   \begin{equation}\label{3.5,7.5}
   \{j\in\mathbb{N}:\chi_{\omega}B^\top (A^\top)^je^{(T-t_i)\mathcal{A}^*}\xi_T\neq 0\}\neq \emptyset.
   \end{equation}
   and
   \begin{equation}\label{3.6,7.5}
   \{j\in\mathbb{N}:B^\top (A^\top)^je^{(T-t_i)\mathcal{A}^*}\xi_T\neq 0\}\neq \emptyset.
   \end{equation}
   Indeed, to show \eqref{3.5,7.5}, by the Hamilton-Cayley theorem, we only need to prove that, for each $\delta\in (0,T)$
\begin{equation}\label{yu-7-5-1}
    B^\top e^{(T-\cdot)A^\top}\chi_{\omega}e^{\mathbb{I}_n\triangle (T-t_i)}\xi_T\neq 0\;\;\mbox{in}\;\;[T-\delta,T].
\end{equation}
Since $\xi_T\in\mathfrak{L}\setminus\{0\}$ (see Proposition \ref{thm-TP-PMP}), there is $\eta_T\in L^2(\Omega;\mathbb{R}^k)$ so that
    $\xi_T=(P^{-1})^{\top}(\eta_T,0)^\top$. Thus, if there are $\delta\in(0,T)$ and $i\in\{0,1,\ldots,p\}$ so that (\ref{yu-7-5-1}) does not hold, then, by (\ref{yu-5-18-2}),
        for each $t\in[T-\delta,T]$,
\begin{eqnarray*}
    0&=&B^\top e^{(T-t)A^\top}\chi_{\omega}e^{(T-t_i)\mathbb{I}_n\triangle}\xi_T =B^\top e^{(T-t)A^\top}
    \chi_{\omega}e^{(T-t_i)\mathbb{I}_n\triangle}(P^{-1})^\top (\eta_T,0)^\top\nonumber\\
    &=&(P^{-1}e^{(T-t)A^\top}B)^\top (\chi_{\omega}e^{(T-t_i)\mathbb{I}_k\triangle}\eta_T,0)^\top
    =B_1^\top e^{(T-t)A^\top_1}\chi_{\omega} e^{(T-t_i)\mathbb{I}_k\triangle}\eta_T.
\end{eqnarray*}
    Because of $\mbox{rank}\,(B_1,A_1B_1,\cdots,A^{k-1}_1B_1)=k$, it follows that $\chi_{\omega} e^{(T-t_i)\mathbb{I}_k\triangle}\eta_T=0$, which implies that $\chi_{\omega}e^{(T-t_i)\mathbb{I}_n\triangle}\xi_T= 0$. It contradicts to (\ref{yu-5-24-9}).
    Thus, (\ref{yu-7-5-1}) is true. Moreover, it is obvious that, if (\ref{3.5,7.5}) holds, then (\ref{3.6,7.5}) is true.
\par

   Thus, we can define two integers in $\mathbb{N}^+$ by
\begin{equation}\label{yu-5-24-11}
    p(i):=\min\{j\in\mathbb{N}:\chi_{\omega}B^\top (A^\top)^je^{(T-t_i)\mathcal{A}^*}\xi_T\neq 0\}
\end{equation}
    and
\begin{equation}\label{yu-5-24-11-b}
    \hat{p}(i):=\min\{j\in\mathbb{N}:B^\top (A^\top)^je^{(T-t_i)\mathcal{A}^*}\xi_T\neq 0\}.
\end{equation}
    (By (\ref{yu-7-5-1}) and the Hamilton-Cayley theorem, we have $p(i)\leq n-1$ and $\hat{p}(i)\leq n-1$.)
\vskip 5pt
   \noindent \emph{Step 3. We prove that
\begin{equation}\label{yu-5-24-11-bb}
    p(i)=\hat{p}(i).
\end{equation}}
\par
    By \eqref{yu-5-24-11} and \eqref{yu-5-24-11-b},
    we have
     $\hat{p}(i)\leq p(i)$. We now show that $p(i)\leq \hat{p}(i)$. For this purpose, we only need to prove that
\begin{equation}\label{yu-5-24-11-bb-1}
    \chi_{\omega}B^\top (A^\top)^{\hat{p}(i)}e^{(T-t_i)\mathcal{A}^*}\xi_T\neq 0.
\end{equation}
    To show \eqref{yu-5-24-11-bb-1}, we first use \eqref{yu-5-24-11-b}  to see
\begin{equation}\label{yu-m-bb-1}
   \varphi_T:= B^\top (A^\top)^{\hat{p}(i)}e^{(T-t_i)A^\top}\xi_T\neq 0.
\end{equation}
    We next to observe that
    $$
    \varphi(\cdot):=B^\top (A^\top)^{\hat{p}(i)}e^{(T-t_i)A^\top}e^{(T-\cdot)\mathbb{I}_n\triangle}\xi_T
=e^{(T-\cdot)\mathbb{I}_m\triangle}\varphi_T,
$$
   from which, it follows that  $\varphi(\cdot)$ is the solution the following equation:
\begin{equation*}\label{yu-5-24-11-bb-2}
\begin{cases}
    \varphi_t=-\mathbb{I}_m\triangle\varphi&\mbox{in}\;\;\Omega\times(0,T),\\
    \varphi=0&\mbox{on}\;\;\partial\Omega\times (0,T),\\
    \varphi(T)=\varphi_T.
\end{cases}
\end{equation*}

Now, by contradiction, we suppose that (\ref{yu-5-24-11-bb-1}) is not true. Then
 we have $\varphi(t_i)=0$. This, together with Proposition \ref{thm-sampling-unique} (with $B:=\mathbb{I}_m$ and $A:=0$), gives that $\varphi_T=0$,
 which  contradicts  (\ref{yu-m-bb-1}). So, (\ref{yu-5-24-11-bb-1}) is proved and  (\ref{yu-5-24-11-bb}) is true.
\vskip 5pt
   \noindent \emph{Step 4. We give an expression on $B^\top e^{(T-t)A^\top}e^{(T-t_i)\mathbb{I}_n\triangle}\xi_T$. }
\par
  By (\ref{yu-5-24-11-bb}) and  (\ref{yu-5-24-11-b}), one can directly check that for any $t\in(-1,T)$,
\begin{eqnarray}\label{yu-5-24-12}
    &\;&B^\top e^{(T-t)A^\top}e^{(T-t_i)\mathbb{I}_n\triangle}\xi_T\nonumber\\
    &=&\sum_{j=0}^{+\infty}(t-t_i)^j\frac{B^\top (-A^\top)^je^{(T-t_i)A^\top}}{j!}e^{(T-t_i)\mathbb{I}_n\triangle}\xi_T\nonumber\\
    &=&(t-t_i)^{p(i)}\sum_{j=p(i)}^{+\infty} (t-t_i)^{j-p(i)}\frac{B^\top (-A^\top)^je^{(T-t_i)A^\top}}{j!}e^{(T-t_i)\mathbb{I}_n\triangle}\xi_T\nonumber\\
&=&(t-t_i)^{p(i)}[a_{p(i)}+(t-t_i)b_{p(i)}(t)],
\end{eqnarray}
    where
\begin{equation}\label{yu-5-24-13}
    a_{p(i)}:=\frac{B^\top(-A^\top)^{p(i)}e^{(T-t_i)A^\top}}{p(i)!}
e^{(T-t_i)\mathbb{I}_n\triangle}\xi_T,
\end{equation}
    and
\begin{equation}\label{yu-5-24-14}
    b_{p(i)}(t):=\sum_{j=p(i)+1}^{+\infty}
    (t-t_i)^{j-p(i)-1}\frac{B^\top (-A^\top)^je^{(T-t_i)A^\top}}{j!}e^{(T-t_i)\mathbb{I}_n\triangle}\xi_T.
\end{equation}
    From (\ref{yu-5-24-11}) and (\ref{yu-5-24-11-bb}), we have
\begin{equation}\label{yu-5-24-15}
    \chi_{\omega} a_{p(i)}\neq 0,
\end{equation}
   while from  \eqref{yu-5-24-14}, we can find $C>0$ so that
\begin{equation}\label{yu-5-24-16}
    \|b_{p(i)}(t)\|_{L^2(\Omega;\mathbb{R}^m)}\leq C \|\xi_T\|_{L^2(\Omega;\mathbb{R}^n)}\;\;\mbox{for all}\;\;t\in[0,T].
\end{equation}
\par
  \vskip 5pt
   \noindent \emph{Step 5. We prove that there is $\delta_0>0$ so that
\begin{equation}\label{yu-5-27-30}
    B^\top e^{(T-t)\mathcal{A}^*}\xi_T
    =B^\top e^{(T-t)A^\top}e^{(T-t_i)\mathbb{I}_n\triangle}\xi_T
    +O((t-t_i)^{p(i)+1})\;\;\mbox{for any}\;\;t\in (t_i-\delta_0, t_i+\delta),
\end{equation}
    where and in what follows,  $O(s^q)$, with $q\in\mathbb{N}^+$, stands for  a function
    $f:\mathbb{R}^+\to (L^2(\Omega))^m$ so that
    $\|f(s)\|_{(L^2(\Omega))^m}\leq Cs^q$ for some constant $C>0$.}

   Given  $\varepsilon\in (0,T)$ so that $t_i\in (-1,T-\varepsilon)$, we take
   $\delta>0$ so that $(t_i-\delta,t_i+\delta)\subset (-1,T-\varepsilon)$.
    Since the operator $\mathbb{I}_n\triangle$, with its domain $D(\mathbb{I}_n\triangle)=H_0^1(\Omega;\mathbb{R}^n)\cap H^2(\Omega;\mathbb{R}^n)$,
         generates an analytical semigroup $\{e^{t\mathbb{I}_n\triangle}\}_{t\geq 0}$,
         we can use the properties of analytic semigroups (see \cite[Chapter 2, Section 2.5]{Pazy})
         to find  $\widetilde{C}>0$ so that
\begin{eqnarray}\label{yu-5-24-19}
    \|(\mathbb{I}_n\triangle)^je^{(T-t_i)\mathbb{I}_n\triangle }\|_{\mathcal{L}(L^2(\Omega;\mathbb{R}^n);L^2(\Omega;\mathbb{R}^n)}
    \leq j!\left(\frac{\widetilde{C}e}{T-t_i}\right)^j\leq  j!(\widetilde{C}e\varepsilon^{-1})^j
    \;\mbox{for all}\; j\in \mathbb{N}.
\end{eqnarray}
     (In \eqref{yu-5-24-19}, we used the fact $T-t_i>\varepsilon$.) Thus, we have that
     when $t\in (t_i-\hat\delta, t_i+\hat\delta)$, with $\hat{\delta}:=\min\{\frac{\varepsilon}{2\widetilde{C}e},\delta\}$,
\begin{equation}\label{yu-5-27-20}
    e^{(T-t)\mathbb{I}_n\triangle}\xi_T=e^{(T-t_i)\mathbb{I}_n\triangle}\xi_T
    +\sum_{j=1}^{+\infty}(t-t_i)^j\frac{(-\mathbb{I}_n\triangle)^j}{j!}
e^{(T-t_i)\mathbb{I}_n\triangle}\xi_T.
\end{equation}
     (Notice that \eqref{yu-5-24-19} ensures the convergence of the series in \eqref{yu-5-27-20} in
     $L^2(\Omega;\mathbb{R}^n)$.) Now,  by (\ref{yu-5-27-20}) and (\ref{yu-5-24-12}), we see that for each
     $t\in (t_i-\hat\delta,t_i+\hat\delta)$,
     \begin{eqnarray}\label{yu-5-27-21}
    &\;&B^\top e^{A^\top (T-t)}(e^{(T-t)\mathbb{I}_n\triangle }-e^{(T-t_i)\mathbb{I}_n\triangle})\xi_T\nonumber\\
    &=&(t-t_i)^j\sum_{j=1}^{+\infty}\frac{(-\mathbb{I}_n\triangle)^j}{j!}
B^\top e^{(T-t)A^\top}e^{(T-t_i)\mathbb{I}_n\triangle}\xi_T\nonumber\\
    &=& \sum_{j=1}^{+\infty}(t-t_i)^{p(i)+j}\frac{(-\mathbb{I}_n\triangle)^j}{j!}
[a_{p(i)}+(t-t_i)b_{p(i)}(t)].
\end{eqnarray}
   While by (\ref{yu-5-24-13}), (\ref{yu-5-24-14}) and (\ref{yu-5-24-19}), one can easily check that there is a constant $C_{p(i)}>0$ so that
\begin{equation*}\label{yu-5-27-22}
    \|(-\mathbb{I}_n\triangle)^ja_{p(i)}\|_{L^2(\Omega;\mathbb{R}^m)}\leq C_{p(i)}j!(\widetilde{C}e\varepsilon^{-1})^j\|\xi_T\|_{L^2(\Omega;\mathbb{R}^n)}\;\;\mbox{for all}\;\;j\in \mathbb{N}
\end{equation*}
    and that when $t\in (t_i-\tilde{\delta}, t_i+\tilde{\delta})$ with $\tilde{\delta}:=\min\{\hat{\delta},\frac{\varepsilon}{2\|A\|_{\mathcal{L}(\mathbb{R}^n;\mathbb{R}^n)}
\widetilde{C}e}\}$,
\begin{eqnarray*}\label{yu-5-27-23}
    &\;&\|(-\mathbb{I}_n\triangle)^jb_{p(i)}(t)\|_{(L^2(\Omega))^m}\nonumber\\
    &\leq&
    \|B\|_{\mathcal{L}(\mathbb{R}^m;\mathbb{R}^n)}
\|A\|^{p(i)}_{\mathcal{L}(\mathbb{R}^n;\mathbb{R}^n)}
    \|\xi_T\|_{L^2(\Omega;\mathbb{R}^n)}\sum_{p=1}^{+\infty}
    (t-t_i)^p(\|A\|_{\mathcal{L}(\mathbb{R}^n;\mathbb{R}^n)}\widetilde{C}e\varepsilon^{-1})^p\nonumber\\
    &\leq&\|B\|_{\mathcal{L}(\mathbb{R}^m;\mathbb{R}^n)}
\|A\|^{p(i)}_{\mathcal{L}(\mathbb{R}^n;\mathbb{R}^n)}
    \|\xi_T\|_{L^2(\Omega;\mathbb{R}^n)}\sum_{p=1}^{+\infty}\left(\frac{1}{2}\right)^p<+\infty.
\end{eqnarray*}
    These imply that when $t\in (t_i-\tilde{\delta}, t_i+\tilde{\delta})$,
\begin{equation*}\label{yu-5-27-24}
    \sum_{j=1}^{+\infty}(t-t_i)^{p(i)+j}\frac{(-\mathbb{I}_n\triangle)^j}{j!}
[a_{p(i)}+(t-t_i)b_{p(i)}(t)]=O((t-t_i)^{p(i)+1}).
\end{equation*}
      This, together with (\ref{yu-5-27-21}), yields (\ref{yu-5-27-30}), with $\delta_0:=\tilde{\delta}$.

   \noindent \emph{Step 6. We prove \eqref{yu-5-24-8}.}

\par
      By (\ref{yu-5-24-12}) and (\ref{yu-5-27-30}), we see that when  $t\in (t_i-\delta_0, t_i+\delta_0)$,
\begin{eqnarray}\label{yu-5-25-3}
    &\;&\mathcal{B}^* e^{(T-t)\mathcal{A}^*}\xi_T\nonumber\\
&=& \mathcal{B}^* e^{(T-t)A^\top}e^{(T-t_i)\mathbb{I}_n\triangle}\xi_T
+\mathcal{B}^* e^{(T-t)A^\top}(e^{(T-t)\mathbb{I}_n\triangle}-e^{(T-t_i)\mathbb{I}_n\triangle})\xi_T\nonumber\\
    &=&
    (t-t_i)^{p(i)}\chi_{\omega}[a_{p(i)}+(t-t_i)b_{p(i)}(t)]
    +\chi_{\omega}O((t-t_i)^{p(i)+1})\nonumber\\
    &=&(t-t_i)^{p(i)}\chi_{\omega}[a_{p(i)}+(t-t_i)b_{p(i)}(t)
+O((t-t_i))].
\end{eqnarray}
    Hence, for each $t\in(-1,t_i)\cap O_{\delta_0}(t_i)$,
\begin{eqnarray}\label{yu-5-25-4}
    &\;&\frac{\mathcal{B}^* e^{(T-t)\mathcal{A}^*}\xi_T}
{\|\mathcal{B}^* e^{(T-t)\mathcal{A}^*}\xi_T\|_{L^2(\Omega;\mathbb{R}^m)}}\nonumber\\
&=&
    \frac{(t-t_i)^{p(i)}\chi_{\omega}[a_{p(i)}+(t-t_i)b_{p(i)}(t)+O((t_i-t))]}
{|t-t_i|^{p(i)}\|\chi_{\omega}[a_{p(i)}+(t-t_i)b_{p(i)}(t))+O((t_i-t))]\|
_{L^2(\Omega;\mathbb{R}^m)}}\nonumber\\
&=&
\begin{cases}
    \displaystyle\frac{\chi_{\omega}[a_{p(i)}+(t-t_i)b_{p(i)}(t)+O((t_i-t))]}
{\|\chi_{\omega}[a_{p(i)}+(t-t_i)b_{p(i)}(t)+O((t_i-t))]\|
_{L^2(\Omega;\mathbb{R}^m)}},&\mbox{if}\;\;p(i)\;\;\mbox{is even},\\
-\displaystyle\frac{\chi_{\omega}[a_{p(i)}+(t-t_i)b_{p(i)}(t)+O((t_i-t))]}
{\|\chi_{\omega}[a_{p(i)}+(t-t_i)b_{p(i)}(t)+O((t_i-t))]\|
_{L^2(\Omega;\mathbb{R}^m)}},&\mbox{if}\;\;p(i)\;\;\mbox{is odd}.
\end{cases}
\end{eqnarray}
     By sending $t\to t_i^-$ in (\ref{yu-5-25-4}) and using (\ref{yu-5-24-15}) and (\ref{yu-5-24-16}), we obtain that
\begin{equation}\label{yu-5-25-5}
    \lim_{t\to t_i^-}\frac{\mathcal{B}^* e^{(T-t)\mathcal{A}^*}\xi_T}
{\|\mathcal{B}^*e^{(T-t)\mathcal{A}^*}\xi_T\|_{L^2(\Omega;\mathbb{R}^m)}}
    =
\begin{cases}
    \displaystyle\frac{\chi_{\omega}a_{p(i)}}{\|\chi_{\omega}a_{p(i)}\|_{L^2(\Omega;\mathbb{R}^m)}},
&\mbox{if}\;\;p(i)\;\;\mbox{is even},\\
    -\displaystyle\frac{\chi_{\omega}a_{p(i)}}{\|\chi_{\omega}a_{p(i)}\|_{L^2(\Omega;\mathbb{R}^m)}},
&\mbox{if}\;\;p(i)\;\;\mbox{is odd}.
\end{cases}
\end{equation}
    This leads to (\ref{yu-5-24-8}).
\vskip 5pt
   \noindent \emph{Step 7. The proof of (\ref{yu-5-25-9}).}
\par
    By (\ref{yu-5-25-3}), (\ref{yu-5-24-15}) and (\ref{yu-5-24-16}), we see that
\begin{equation}\label{yu-5-25-14}
    \lim_{t\to t_i^+}
    \frac{\mathcal{B}^*e^{(T-t)\mathcal{A}^*}\xi_T}
{\|\mathcal{B}^* e^{(T-t)\mathcal{A}^*}\xi_T\|_{L^2(\Omega;\mathbb{R}^m)}}=\frac{\chi_{\omega}a_{p(i)}}
{\|\chi_{\omega}a_{p(i)}\|_{L^2(\Omega;\mathbb{R}^m)}},
\end{equation}
   which leads to (\ref{yu-5-25-9}).
\vskip 5pt
In summary, we conclude that the {\it key conclusion} has been proved.

\vskip 5pt

{\it We now show  that  $(\mathcal{TP})_{y_0}$ has a unique optimal control, which has the bang-bang property.}
To this end, we let  $\hat u$ be an optimal control to $(\mathcal{TP})_{y_0}$.
Given $T\in(0,T^*_{y_0})$, let $\xi_T$ be given by Proposition \ref{thm-TP-PMP}. Then by (\ref{PMP-eq}) and the
{\it key conclusion}, we have
\begin{eqnarray}\label{optimal-PMP-representation}
 \hat u|_{(0,T)}(t) =   f_{\xi_T}(t)
 ~~\mbox{for a.e.}\;\; t\in (0,T),
\end{eqnarray}
where $f_{\xi_T}$ is given by (\ref{def-f-xiT}). Since $f_{\xi_T}$ is in $\mathcal{PC}([0,T);B_1^m(0))$
(This follows from (\ref{def-f-xiT}) and \eqref{3.1,7.5wang}.)
and because \eqref{optimal-PMP-representation} holds for each $T\in(0,T^*_{y_0})$, (Notice that when $T$ varies, $f_{\xi_T}$, as well as $\xi_T$, changes.), we have
\begin{eqnarray}\label{f-uniqueness}
 f_{\xi_T}|_{(0,S)}=f_{\xi_S}
 \;\;\mbox{when}\;\;
 0<S<T<T^*_{y_0},
\end{eqnarray}
    where $\xi_S$ is given in Proposition \ref{thm-TP-PMP}, where $T$ is replaced by  $S$.

    Meanwhile, it follows  from (\ref{adjoint-eq}) in Proposition \ref{thm-TP-PMP} and Corollary \ref{cor-switch-behavior} that
\begin{eqnarray}\label{I-xiT-uniform-upper-bound}
\sup_{ 0<S<T^*_{y_0} } \sharp\, \left[\bigg\{ t\in (0,S)~:~ \mathcal B^*e^{(S-t) \mathcal A^*}\xi_S  = 0 \bigg\}\right]
<+\infty.
\end{eqnarray}
  (Here, we note that $T^*_{y_0}<+\infty$.) Define a control $u^*_{y_0}$ in the following manner: For each $t\in (0,T^*_{y_0})$, we arbitrarily
take $T\in (t,  T^*_{y_0})$ and then define
\begin{eqnarray}\label{def-unique-optimal-control}
 u^*_{y_0}(t):= f_{\xi_T}(t).
\end{eqnarray}
 By (\ref{f-uniqueness}), we see that $u^*_{y_0}$ is well defined. From \eqref{def-unique-optimal-control} and (\ref{optimal-PMP-representation}), it follows that
\begin{eqnarray*}
 u^*_{y_0}(t)=\hat u(t)
 ~~\mbox{for a.e.}\;\; t\in (0,T^*_{y_0}).
\end{eqnarray*}
Thus, $(\mathcal{TP})_{y_0}$ has a unique optimal control $u^*_{y_0}$. Moreover, from (\ref{def-unique-optimal-control}) and (\ref{def-f-xiT}), $u^*_{y_0}$ has the bang-bang property.
\par
    Finally, since  $f_{\xi_T}\in \mathcal{PC}([0,T);B^m_1(0))$ for any $T\in(0,T^*_{y_0})$,
    it follows from (\ref{def-unique-optimal-control}), (\ref{I-xiT-uniform-upper-bound}) that the above control $u^*_{y_0}$ is in  the space $\mathcal{PC}([0,T^*_{y_0});B^m_1(0))$.
    Thus we finish the proof of the conclusion $(i)$.

\vskip 5pt
    \emph{We next show $(ii)$.} Let $I$ be an open subinterval of $(0,T^*_{y_0})$ with $|I|\leq d_A$. We aim to show that the optimal control $u^*_{y_0}$ defined in (\ref{def-unique-optimal-control}) has at most $q_{A,B}-1$ switching points over $I$. By contradiction, we suppose that it was not true. Then there would be a set $\{t_j\}_{j=1}^{q_{A,B}}\subset I$ so that each $t_j$  is a switching point of $u^*_{y_0}$.
     Let $\widehat T \in (0,T^*_{y_0})$ so that
 \begin{eqnarray*}
  \widehat T > \max_{1\leq j\leq q_{A,B}} t_j.
 \end{eqnarray*}
    Then from (\ref{def-unique-optimal-control}) and (\ref{def-f-xiT}), one has that
 \begin{eqnarray}\label{3,30,7.6}
u^*_{y_0}|_{(0,\widehat{T})}(t)=
 \frac{ \mathcal B^*e^{(\widehat T-t)\mathcal A^*} \xi_{\widehat T} }
 { \|\mathcal B^*e^{(\widehat T-t)\mathcal A^*} \xi_{\widehat T} \|_{L^2(\Omega;\mathbb R^m)} },
 ~~\mbox{when}\;t\in(0,\widehat T)\setminus \mathfrak{T}_{ \xi_{\widehat T} }.
\end{eqnarray}
     Notice that the function on the right hand side of \eqref{3,30,7.6} is continuous at each
     $t\in (0,\widehat T)\setminus \mathfrak{T}_{ \xi_{\widehat T} }$;
    $\{t_j\}_{j=1}^{q_{A,B}}\subset \mathfrak{T}_{ \xi_{\widehat T} }$; and each $t_j$, with
    $j=1,\dots, q_{A,B}$ is in $I$. Thus, we see from \eqref{3,30,7.6} that
\begin{eqnarray*}
 \mathcal B^*e^{(\widehat T-t)\mathcal A^*} \xi_{\widehat T}=0
 \;\;\mbox{for each}\;\;
 t\in \{t_j\}_{j=1}^{q_{A,B}}\subset I.
\end{eqnarray*}
This, along with Proposition \ref{thm-sampling-unique}, yields
\begin{eqnarray*}
  \mathcal B^*e^{(\widehat T-t)\mathcal A^*} \xi_{\widehat T}=0
  \;\;\mbox{for all}\;\;
 t\in (0,\widehat T),
\end{eqnarray*}
which contradicts (\ref{I-xiT-uniform-upper-bound}). Thus, the conclusion $(ii)$ is true.
\vskip 5pt
    \emph{We finally prove $(iii)$.}
     Indeed, by (\ref{def-unique-optimal-control}),
     we have that for each $\hat{t}\in(0,T^*_{y_0})$, if it is a switching point of the optimal control $u^*_{y_0}$, then there exist $T>\hat{t}$ and $\xi_T\in \mathfrak{L}\setminus\{0\}$ so that $\hat{t}\in \mathfrak{T}_{\xi_T}$, i.e., there is a $i^*\in\{1,2,\ldots,p\}$ so that $\hat{t}=t_{i^*}$, where $\mathfrak{T}_{\xi_T}$ is defined by (\ref{3.1,7.5wang}). By (\ref{yu-5-25-5}) and (\ref{yu-5-25-14}), we can conclude that
     $p(i^*)$, which is defined by (\ref{yu-5-24-11}), is an odd number and
     $\lim_{s\to t_{i^*}^+}u^*(s)+\lim_{s\to t_{i^*}^-}u^*(s)=0$. Thus $(iii)$ is true.

\par
    Hence, we complete the proof of Theorem \ref{yu-theorem-5-24-1}.
\end{proof}

\section{An example}
    In this section, we present an example to show that, in some time optimal control problem of coupled heat system, the switching phenomenon happens. From this perspective, the system \eqref{yu-5-16-1} differs from
    the pure heat equation, since the optimal control for the later has no any switching point (see the note $(b_3)$).
\vskip 5pt
    \textbf{Example.}
  Let $\omega=\Omega$. Let $n:=2$ and $m:=1$. Let
  \begin{equation}\label{yu-6-26-1}
    A:=\left(
         \begin{array}{cc}
           0 & 1 \\
           -1 & 0 \\
         \end{array}
       \right),\;\;B:=\left(
                        \begin{array}{c}
                          1 \\
                          0 \\
                        \end{array}
                      \right).
\end{equation}
    From \eqref{yu-6-26-1}, we can directly check that  $\mbox{rank}\,(B,AB)=2$ and $\sigma(A)=\{i,-i\}$,
    and that
    \begin{equation}\label{yu-6-26-2}
    e^{tA}=\left(
             \begin{array}{cc}
               \cos t & \sin t\\
               -\sin t & \cos t \\
             \end{array}
           \right)\;\;\mbox{for each}\;\;t\in\mathbb{R}.
\end{equation}
   Write $\lambda_i$ for the $i$-th eigenvalue of the operator $-\triangle$, with its domain $H_0^1(\Omega)\cap H^2(\Omega))$, and let $e_i$ be the corresponding normalized eigenvector.

    First,  according to  Proposition \ref{yu-lemma-5-18-2},  the system  (\ref{yu-5-16-1}),
     with the above $(A,B)$, is $L^\infty_{x,t}$ null controllable and $\mathfrak{L}=L^2(\Omega;\mathbb{R}^2)$. (Here, $\mathfrak{L}$ is defined by \eqref{yu-5-16-5}.)

     Second,  from  \eqref{yu-6-26-2},   one can directly check that
\begin{equation}\label{yu-6-26-3}
    \|e^{t\mathcal{A}}\|_{\mathcal{L}(L^2(\Omega;\mathbb{R}^2);L^2(\Omega;\mathbb{R}^2))}
    =\|e^{t(\mathbb{I}_2\triangle+A)}\|_{\mathcal{L}(L^2(\Omega;\mathbb{R}^2);L^2(\Omega;\mathbb{R}^2))}
    \leq e^{-\lambda_1t}\;\;\mbox{for all}\;\;t\in\mathbb{R}^+.
\end{equation}

    From these and  by the note $(a_1)$ in Section \ref{yu-subsection-1-1},
    we can see that the problem $(\mathcal{TP})_{y_0}$ has an admissible control for each $y_0\in L^2(\Omega;\mathbb{R}^2)$. Then by $(i)$ in Theorem \ref{yu-theorem-5-24-1}, $(\mathcal{TP})_{y_0}$ has a unique optimal control $u^*_{y_0}$ whose restriction over $[0,T^*_{y_0})$ is in $\mathcal{PC}([0,T^*_{y_0});B_1^1(0))$.
\par
    Let $\eta:=(\eta_1,\eta_2)^\top \in\mathbb{R}^2$ so that
\begin{equation}\label{yu-6-26-4}
    \|\eta\|_{\mathbb{R}^2}=\eta_1^2+\eta_2^2>\lambda_1^{-1}(e^{4\pi \lambda_1}-1).
\end{equation}
     Choose
    $y_0:=\eta e_1$. By the optimality of $(u^*_{y_0},T^*_{y_0})$, we have
\begin{eqnarray}\label{yu-6-b-26-2}
    0&=&e^{T^*_{y_0}\mathcal{A}}y_0
    +\int_0^{T^*_{y_0}}e^{(T^*_{y_0}-t)\mathcal{A}}\mathcal{B}u_{y_0}^*(t)dt\nonumber\\
    &=& e^{T^*_{y_0}(A-\lambda_1\mathbb{I}_2)}\eta e_1
    +\int_0^{T^*_{y_0}}e^{(T^*_{y_0}-t)A}Be^{(T^*_{y_0}-t)\mathbb{I}_2\triangle}u^*_{y_0}(t)dt.
\end{eqnarray}
    This, along with (\ref{yu-6-26-2}) and (\ref{yu-6-26-3}), gives that
\begin{equation*}
    e^{-\lambda_1 T^*_{y_0}}\|\eta\|_{\mathbb{R}^2}
    \leq \int_0^{T^*_{y_0}}e^{-\lambda_1(T^*_{y_0}-t)}dt=\lambda_1^{-1}(1-e^{-\lambda_1 T^*_{y_0}}).
\end{equation*}
    (Here, we used the facts that  $\|B\|_{\mathcal{L}(\mathbb{R};\mathbb{R}^2)}\leq 1$ and $\|e^{At}\|_{\mathcal{L}(\mathbb{R}^2;\mathbb{R}^2)}=1$ for each $t\in\mathbb{R}$.) The above leads to
$$
    \|\eta\|_{\mathbb{R}^2}\leq \lambda_1^{-1}(e^{\lambda_1T^*_{y_0}}-1).
$$
    This, together with (\ref{yu-6-26-4}), gives that
\begin{equation}\label{yu-6-26-5}
    T_{y_0}^*\geq \lambda_1^{-1}\ln(\lambda_1\|\eta\|_{\mathbb{R}^2}+1)>4\pi.
\end{equation}
    Let $\widehat{T}:=4\pi$. By Proposition \ref{thm-TP-PMP} and (\ref{yu-6-26-5}), there is a $\xi_{\widehat{T}}\in L^2(\Omega;\mathbb{R}^2)\setminus\{0\}$ so that (\ref{adjoint-eq}) and (\ref{PMP-eq}) hold. Thus, by (\ref{PMP-eq}), we have
\begin{equation}\label{yu-6-26-6}
    u^*_{y_0}|_{(0,\widehat{T})}(t)=
    \frac{\mathcal{B}^* e^{(\widehat{T}-t)\mathcal{A}^*}\xi_{\widehat{T}}}{\|\mathcal{B}^* e^{(\widehat{T}-t)\mathcal{A}^*}\xi_{\widehat{T}}\|_{L^2(\Omega)}}\;\;\mbox{for each}\;\;t\in(0,\widehat{T})\setminus\mathfrak{S}_{\widehat{T}},
\end{equation}
    where
\begin{equation}\label{yu-6-26-7}
    \mathfrak{S}_{\widehat{T}}:=\left\{t\in(0,\widehat{T}):
    \mathcal{B}^*e^{(\widehat{T}-t)\mathcal{A}^*}\xi_{\widehat{T}}=0\right\}.
\end{equation}
\par

{\it Our aim is to claim that $(i)$ the set $\mathfrak{S}_{\widehat{T}}$ is not empty;
$(ii)$ each $\hat t$ in this set is a switching point of the optimal control $u^*_{y_0}$. }

For this purpose, we first show
\begin{equation}\label{yu-6-26-b-1}
    u^*_{y_0}(t)=f(t)e_1\;\;\mbox{for each}\;\;t\in(0,T^*_{y_0}),
\end{equation}
    where  $\|f(t)\|_{\mathbb{R}}\leq 1$ a.e. $t\in(0,T^*_{y_0})$. Actually, if $u^*(t)=\sum_{i=1}^{+\infty}f_i(t)e_i$ with $\sum_{i=1}^{+\infty}\|f_i(t)\|_{\mathbb{R}}^2\leq 1$ a.e. $t\in(0,T^*_{y_0})$, then by (\ref{yu-6-b-26-2}), we have
\begin{equation*}\label{yu-6-26-b-3}
    0=e^{T^*_{y_0}(A-\lambda_1\mathbb{I}_2)}\eta e_1+
    \sum_{i=1}^{+\infty}\int_0^{T^*_{y_0}}e^{(T^*_{y_0}-t)(A-\lambda_i\mathbb{I}_2)}Bf_i(t)dte_i.
\end{equation*}
    Thus we have
\begin{equation*}\label{yu-6-26-b-4}
\begin{cases}
    e^{T^*_{y_0}(A-\lambda_1\mathbb{I}_2)}\eta e_1+
    \displaystyle\int_0^{T^*_{y_0}}e^{(T^*_{y_0}-t)(A-\lambda_1\mathbb{I}_2)}Bf_1(t)dte_1=0,\\
    \displaystyle\sum_{i=2}^{+\infty}\displaystyle\int_0^{T^*_{y_0}}e^{(T^*_{y_0}-t)(A-\lambda_i\mathbb{I}_2)}Bf_i(t)dte_i=0.
\end{cases}
\end{equation*}
    So the control $\hat{u}^*_{y_0}:=f_1e_1$ is also an optimal control to $(\mathcal{TP})_{y_0}$.
   By the uniqueness of the optimal control to $(\mathcal{TP})_{y_0}$ (see $(i)$ in Theorem \ref{yu-theorem-5-24-1}), we find that
\begin{equation*}\label{yu-6-26-b-5}
    u^*_{y_0}=\hat{u}^*_{y_0}.
\end{equation*}
    Hence (\ref{yu-6-26-b-1}) holds.
\par
    Next, we prove that
\begin{equation}\label{yu-6-26-b-6}
    \xi_{\widehat{T}}=\zeta e_1\;\;\mbox{for some}\;\;\zeta\in\mathbb{R}^2\setminus \{0\}.
\end{equation}
     For this purpose, we suppose that $\xi_{\widehat{T}}=\sum_{i=1}^{+\infty}\zeta_ie_i$ with $\{\zeta_i\}_{i\in\mathbb{N}^+}\subset\mathbb{R}^2$ and $\sum_{i=1}^{+\infty}\|\zeta_i\|_{\mathbb{R}^2}^2>0$.
    Thus, by (\ref{yu-6-26-6}), we obtain that
\begin{equation*}\label{yu-6-26-b-7}
    u_{y_0}^*|_{(0,\widehat{T})\setminus\mathfrak{S}_{\widehat{T}}}(t)=\|\mathcal{B}^* e^{(\widehat{T}-t)\mathcal{A}^*}\xi_{\widehat{T}}\|_{L^2(\Omega)}^{-1}\sum_{i=1}^{+\infty}B^\top e^{(\widehat{T}-t)(A^\top-\lambda_i\mathbb{I}_2)}\zeta_ie_i.
\end{equation*}
    This, along with (\ref{yu-6-26-b-1}), yields that
\begin{equation}\label{yu-6-26-b-8}
    \|\mathcal{B}^* e^{(\widehat{T}-t)\mathcal{A}^*}\xi_{\widehat{T}}\|_{L^2(\Omega)}^{-1}\sum_{i=2}^{+\infty}B^\top e^{(\widehat{T}-t)(A^\top-\lambda_i\mathbb{I}_2)}\zeta_ie_i=0\;\;\mbox{a.e.}\;\;t\in(0,\widehat{T}).
\end{equation}
    By Corollary \ref{cor-switch-behavior}, the set $\mathfrak{S}_{\widehat{T}}$ has at most finite elements.
    It follows from (\ref{yu-6-26-b-8}) that
\begin{equation}\label{yu-6-26-bb-1}
    B^\top e^{(\widehat{T}-t)(A^\top-\lambda_i\mathbb{I}_2)}\zeta_i=0\;\;\mbox{a.e.}\;\;t\in(0,\widehat{T})
    \;\;\mbox{for each}\;\;i\geq 2.
\end{equation}
    Since $\mbox{rank}\;(B,AB)=2$ (which implies that $\mbox{rank}\;(B,(A-\lambda_i\mathbb{I}_2)B)=2$ for each $i\in\mathbb{N}^+$), by (\ref{yu-6-26-bb-1}), we can conclude that
\begin{equation*}\label{yu-6-26-bb-2}
    \zeta_i=0\;\;\mbox{for each}\;\;i\geq 2.
\end{equation*}
    Thus, (\ref{yu-6-26-b-6}) holds.

    By (\ref{yu-6-26-b-6}), we can write
    $$
    \xi_{\widehat{T}}=(\zeta_1,\zeta_2)^\top e_1.
    $$
    Thus, by (\ref{yu-6-26-1}), after some simple computations, we get
\begin{equation}\label{yu-6-26-9}
    \mathcal{B}^*e^{(\widehat{T}-t)\mathcal{A}^*}\xi_{\widehat{T}}
    =e^{-\lambda_1(\widehat{T}-t)}e_1\left(\zeta_1\cos (\widehat{T}-t)-\zeta_2\sin(\widehat{T}-t)\right)
    =e^{-\lambda_1(\widehat{T}-t)}e_1\sqrt{|\zeta_1|^2+|\zeta_2|^2}\sin(\widehat{T}-t+\theta),
\end{equation}
    where $\theta:=\arctan(\zeta_1/\zeta_2)$. (Here, we permit $\theta=\frac{\pi}{2}$ if $\zeta_2=0$.)
   By (\ref{yu-6-26-6}) and (\ref{yu-6-26-9}), we have
\begin{equation}\label{yu-6-26-10}
    u^*_{y_0}|_{(0,\widehat{T})}(t)=
    \frac{e_1\sqrt{|\zeta_1|^2+|\zeta_2|^2}\sin(\widehat{T}-t+\theta)}
    {\sqrt{|\zeta_1|^2+|\zeta_2|^2}|\sin(\widehat{T}-t+\theta)|}
    \;\;\mbox{for each}\;\;\;\;t\in(0,\widehat{T})\setminus\mathfrak{S}_{\widehat{T}}.
\end{equation}

   Finally, by (\ref{yu-6-26-5}), (\ref{yu-6-26-7}) and (\ref{yu-6-26-9}), one can check easily that
\begin{equation*}\label{yu-6-26-11}
    \mathfrak{S}_{\widehat{T}}=\{t\in(0,\widehat{T}):\sin(\widehat{T}-t+\theta)=0\}\neq \emptyset,
\end{equation*}
 which leads to the claim $(i)$.
 While  by (\ref{yu-6-26-10}), we see that for each $\hat{t}\in\mathfrak{S}_{\widehat{T}}$,
\begin{equation*}\label{yu-6-26-12}
    \lim_{t\to \hat{t}^+}u^*_{y_0}(t)\neq \lim_{t\to \hat{t}^-}u^*_{y_0}(t),
\end{equation*}
   which leads to the claim $(ii)$.

\end{document}